\providecommand{\tabularnewline}{\\}
\numberwithin{equation}{section}
\numberwithin{figure}{section}
\theoremstyle{plain}
\newtheorem{thm}{Theorem}
  \theoremstyle{plain}
  \newtheorem{lem}[thm]{Lemma}
  \theoremstyle{remark}
  \newtheorem{rem}[thm]{Remark}
  \theoremstyle{plain}
  \newtheorem{prop}[thm]{Proposition}
  \theoremstyle{plain}
  \newtheorem{cor}[thm]{Corollary}
  \theoremstyle{remark}
  \newtheorem*{rem*}{Remark}
\begin{document}

\title{On the $PSL_{2}(\mathbb{F}_{19})$-invariant cubic sevenfold}

\author{Atanas Iliev, Xavier Roulleau}
\begin{abstract}
It has been proved by Adler that there exists a unique cubic hypersurface
$X^{7}$ in $\mathbb{P}^{8}$ which is invariant under the action
of the simple group $PSL_{2}(\mathbb{F}_{19})$. In the present note
we study the intermediate Jacobian of $X^{7}$ and in particular we
prove that the subjacent $85$-dimensional torus is an Abelian variety.
The symmetry group $G=PSL_{2}(\mathbb{F}_{19})$ defines uniquely
a $G$-invariant abelian $9$-fold $A(X^{7})$, which we study in
detail and describe its period lattice. 
\end{abstract}

\keywords{Cubic sevenfold, automorphism group, Intermediate Jacobian.}

\subjclass[2000]{11G10, 14J50, 14J70}

\maketitle

\section*{Introduction}

Let $PSL_{2}(\mathbb{F}_{q})$ be the projective special linear group
of order $2$ matrices over the finite field with $q$ element $\mathbb{F}_{q}$.
There exist exactly two non trivial irreducible complex representations
$W_{\frac{q-1}{2}},$ $\bar{W}_{\frac{q-1}{2}}$ of $PSL_{2}(\mathbb{F}_{q})$
on a space of dimension $\frac{q-1}{2}$, each one complex conjugated
to the other, see \cite{Atlas}. In \cite{Adler}, Adler proved that
for any $q$ a power of a prime $p\geq7$, with $p=3\,\mbox{mod }8$,
there exists (up to a constant multiple) a unique $PSL_{2}(\mathbb{F}_{q})$-invariant
cubic form $f_{q}$ of $W_{\frac{q-1}{2}}$. We call the corresponding
unique $PSL_{2}(\mathbb{F}_{q})$-invariant cubic hypersurface\[
X^{\frac{q-5}{2}}=\{f_{q}=0\}\hookrightarrow\mathbb{\mathbb{P}}(W_{\frac{q-1}{2}})\]
the \emph{Adler cubic for $q$}.

In the smallest non-trivial case, $q=11=1\cdot8+3$, the the Adler
cubic in $\mathbb{P}^{4}=\mathbb{P}(W_{\frac{11-1}{2}})$ coincides
with the Klein cubic threefold\[
X^{3}=\{x_{1}^{2}x_{2}+x_{2}^{2}x_{3}+x_{3}^{2}x_{4}+x_{4}^{2}x_{5}+x_{5}^{2}x_{1}=0\}.\]
 This threefold has been introduced by Felix Klein as a higher dimensional
analog of the well known Klein quartic curve, the unique curve in
the projective plane with symmetry group $PSL_{2}(\mathbb{F}_{7})$,
see \cite{Klein}. In \cite{Roulleau}, the second author has proven
that there exists a unique Abelian fivefold $A(X^{3})$ with a $PSL_{2}(\mathbb{F}_{11})$-invariant
principal polarization and he has explicitly described the period
lattice of $A(X^{3})$. In this case, $A(X^{3})$ coincide with the
Griffiths intermediate Jacobian $J_{G}(X^{3})$ of the cubic threefold
$X^{3},$ with the principal polarization coming from the intersection
of real $3$-cycles on $X^{3}$.

\medskip{}

In this paper we study the next case - the Adler cubic for $q=19=2\cdot8+3$.
By using the general descriptions from Theorem 4 of \cite{Adler},
one can find an equation of the Adler cubic $X^{7}$:\[
\begin{array}{cc}
 & f_{19}=x_{1}^{2}x_{6}+x_{6}^{2}x_{2}+x_{2}^{2}x_{7}+x_{7}^{2}x_{4}+x_{4}^{2}x_{5}+x_{5}^{2}x_{8}+x_{8}^{2}x_{9}+x_{9}^{2}x_{3}+x_{3}^{2}x_{1}\\
 & -2(x_{1}x_{7}x_{8}+x_{2}x_{3}x_{5}+x_{4}x_{6}x_{9}).\end{array}\]
 \smallskip{}

In the first section, we study a similar invariant principally polarized
Abelian ninefold $A(X^{7})$ defined uniquely by the Adler cubic sevenfold,
and compute the period lattice and the first Chern class of the polarization
of $A(X^{7})$.

In the second section, we show that on the 85-dimensional complex
torus $J=(H^{5,2}\oplus H^{4,3})^{*}/H_{7}(F_{19},\mathbb{Z})$ of
the Griffiths intermediate Jacobian $J_{G}(X^{7})$ of $X^{7}$, one
can introduce a structure of a $PSL_{2}(\mathbb{F}_{19})$-invariant
polarization which provides $J$ with a structure of an Abelian variety.

In the third section, we study invariant properties of the Adler-Klein
pencil of cubic sevenfolds, an analog of the Dwork pencil of quintics
threefolds see e.g. \cite{Candelas}. Notice that from the point of
view of variation of Hodge structure, the cubic sevenfolds can be
considered as higher dimensional analogs of Calabi-Yau threefolds,
see \cite{Collino} and \cite{IlievManivel}.

 \textbf{Acknowledgements}.  We thank Bert van Geemen for its comments on a preliminary version of this paper. The second author acknowledges the hospitality of the Seoul National University where part of this work was done. He is a member of the project Geometria Algebrica PTDC/MAT/099275/2008 and was supported by FCT grant SFRH/BPD/72719/2010.

\section{The invariant Abelian $9$-fold of the Adler cubic $7$-fold.}

We begin by some notations. Let $e_{1},\dots,e_{9}$ be a basis of
a $9$-dimensional vector space $V$. Let $\tau\in GL(V)$ be the
order $19$ automorphism defined by\[
\tau:e_{j}\rightarrow\xi^{j^{2}}e_{j},\]
where $\xi=e^{2i\pi/19}$, $i^{2}=-1$, and let be $\sigma\in GL(V)$
the order $9$ automorphism defined by\[
\sigma:e_{k}\rightarrow e_{\left|6j\right|},\]
 where for any integer $t$ the notation $\left|\text{t}\right|$
means the unique integer $0\leq u\leq9$ such that $t=\pm u\mbox{ mod }19$.
We denote by $\mu$ the order $2$ automorphism given in the basis
$e_{1},\cdots,e_{9}$ by the matrix:\[
\mu=\left(\frac{i}{\sqrt{19}}\left(\frac{kj}{19}\right)(\xi^{kj}-\xi^{-kj})\right)_{1\leq k,j\leq9}\]
where $\left(\frac{kj}{19}\right)$ is the Legendre symbol. The group
generated by $\tau,\sigma$ and $\mu$ is isomorphic to $PSL_{2}(\mathbb{F}_{19})$
and define a representation of $PSL_{2}(\mathbb{F}_{19})$, see \cite{Atlas}.
Let us define\[
\begin{array}{c}
v_{k}=\tau^{k}(e_{1}+\dots+e_{9})=\\
=\xi^{k}e_{1}+\xi^{4k}e_{2}+\xi^{9k}e_{3}+\xi^{16k}e_{4}+\xi^{6k}e_{5}+\xi^{17k}e_{6}+\xi^{11k}e_{7}+\xi^{7k}e_{8}+\xi^{5k}e_{9}\end{array}\]
 (thus $v_{k}=v_{k+19}$) and\[
w_{k}'=\frac{1}{1+2\nu}(v_{k}-5v_{k+1}+10v_{k+2}-10v_{k+3}+5v_{k+4}-v_{k+5}),\]
where \[
\nu=\sum_{k=1}^{k=9}\xi^{k^{2}}=\frac{-1+i\sqrt{19}}{2}.\]

An endomorphism $h$ of a torus $A$ acts on the tangent space $T_{A,0}$
by its differential $dh$, which we call the \emph{analytic representation}
of $h$. For the ease of the notations, we will use the same letter
for $h$ and its analytic representation. 

\medskip{}

In the present section, we will prove the following Theorem:
\begin{thm}
\label{thm:The-Abelian-variety}There exist:

(1) a $9$-dimensional torus $A=V/\Lambda$, $T_{A,0}=V$ such that
the elements $\tau$, $\sigma$ of $GL(V)$ are the analytic representations
of automorphisms of $A$. The torus $A$ is isomorphic to the Abelian
variety $E^{9}$, where $E$ is the elliptic curve $\mathbb{C}/\mathbb{Z}[\nu]$,
($\nu=\frac{-1+i\sqrt{19}}{2}$).

(2) a unique principal polarization $\Theta$ on $A$ which is invariant
by the automorphisms $\sigma,\tau$. The period lattice of $A$ is
then:\[
H_{1}(A,\mathbb{Z})=\frac{\mathbb{Z}[\nu]}{1+2\nu}w_{0}'+\frac{\mathbb{Z}[\nu]}{1+2\nu}w_{1}'+\frac{\mathbb{Z}[\nu]}{1+2\nu}w_{2}'+\frac{\mathbb{Z}[\nu]}{1+2\nu}w_{3}'+\bigoplus_{k=4}^{8}\mathbb{Z}[\nu]v_{k},\]
and $c_{1}(\Theta)=\frac{2}{\sqrt{19}}\sum_{k=1}^{k=9}dx_{k}\wedge d\bar{x}_{k}$,
where $x_{1},\dots,x_{9}$ is the dual basis of the $e_{j}$'s. 
\end{thm}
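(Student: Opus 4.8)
\emph{Setup.} I would first record the number theory and representation theory. Put $K=\mathbb{Q}(\nu)=\mathbb{Q}(\sqrt{-19})$ and $\mathcal{O}=\mathbb{Z}[\nu]$; since $K$ has class number $1$, $\mathcal{O}$ is a principal ideal domain. Set $\theta=1+2\nu$, so $\theta=\sqrt{-19}$, $\theta^{2}=-19$, $\bar{\theta}=-\theta$, $\nu-\bar{\nu}=\theta$, and each of $9,\nu,\bar{\nu}=-1-\nu$ reduces to $9$ in $\mathcal{O}/(\theta)\cong\mathbb{F}_{19}$. The matrices $\tau$ (diagonal, entries of modulus $1$) and $\sigma$ (a permutation matrix acting on $e_{1},\dots,e_{9}$ as a single $9$-cycle) are unitary for $H_{\mathrm{std}}(x,y)=\sum_{k}x_{k}\bar{y}_{k}$, and $\Gamma:=\langle\tau,\sigma\rangle$ (a Frobenius group of order $171$) acts irreducibly on $V=\mathbb{C}^{9}$: over $\langle\tau\rangle$ it is a sum of nine distinct characters, simply transitively permuted by $\sigma$, so $V=\mathrm{Ind}_{\langle\tau\rangle}^{\Gamma}\chi$. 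Hence $\mathrm{End}_{\Gamma}(V)=\mathbb{C}$, the $\Gamma$-invariant Hermitian forms on $V$ make up the real line $\mathbb{R}\,H_{\mathrm{std}}$, and $\nu\cdot\mathrm{Id}$ commutes with $\Gamma$ with $H_{\mathrm{std}}(\nu x,y)=\nu H_{\mathrm{std}}(x,y)$. Finally, since $\tau v_{k}=v_{k+1}$, $\sigma v_{k}=v_{9k}$ and the characteristic polynomial $p(t)=\prod_{q}(t-\xi^{q})$ of $\tau|_{V}$ (product over the nine quadratic residues $q$) has coefficients in $\mathcal{O}$ (Galois-stable over $K$, and integral), the $K$-span $V_{K}:=\sum_{k}Kv_{k}$ is a $K$-rational form of $V$ for $\Gamma$, and $H_{\mathrm{std}}(v_{j},v_{k})=\nu_{j-k}$, where $\nu_{t}:=\sum_{m=1}^{9}\xi^{tm^{2}}$ equals $9$, $\nu$, or $\bar{\nu}$ according as $t\equiv 0$, $t$ a quadratic residue, or a non-residue mod $19$; in particular $H_{\mathrm{std}}$ is $K$-valued on every $\mathcal{O}$-lattice in $V_{K}$.

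\emph{Part (1).} The point is that \emph{any} full lattice $\Lambda\subset V$ stable under $\Gamma$ and $\nu\cdot\mathrm{Id}$ already works: it is a torsion-free $\mathcal{O}$-module of rank $9$, hence free (PID), and any $\mathcal{O}$-basis of it is a $\mathbb{C}$-basis of $V$ (the vectors $f_{j},\nu f_{j}$ form an $\mathbb{R}$-basis and $\nu\notin\mathbb{R}$), so $V/\Lambda\cong(\mathbb{C}/\mathcal{O})^{9}=E^{9}$ and $\tau,\sigma$ descend to automorphisms with the stated analytic representations. Such lattices exist, e.g. $L_{0}:=\bigoplus_{k=0}^{8}\mathcal{O}v_{k}$ (it is $\Gamma$- and $\nu$-stable, since the $\Gamma$-orbit of $v_{0}$ is $\{v_{k}\}_{k\in\mathbb{Z}/19}$ and $v_{k+9}\in\sum_{i}\mathcal{O}v_{k+i}$ by $p(\tau)=0$, while $v_{0},\dots,v_{8}$ are $\mathbb{C}$-independent by a Vandermonde in the distinct $\xi^{j^{2}}$). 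It then remains to recognize the displayed $\Lambda$ as such a lattice, whence Part (1) follows for it.

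\emph{Part (2).} The key reduction: an invariant polarization of $A=V/\Lambda$ is a positive Hermitian form $H$, invariant under $\sigma,\tau$, with $\mathrm{Im}\,H(\Lambda,\Lambda)\subset\mathbb{Z}$, and it is principal iff $\mathrm{Im}\,H$ is unimodular on $\Lambda$; by the setup $H=\lambda H_{\mathrm{std}}$, $\lambda>0$. For $x,y\in\Lambda\subset V_{K}$ write $H_{\mathrm{std}}(x,y)=a(x,y)+b(x,y)\nu\in K$; then $\mathrm{Im}\,H_{\mathrm{std}}(x,y)=\tfrac{\sqrt{19}}{2}b(x,y)$, so on $\Lambda$
\[
\mathrm{Im}(\lambda H_{\mathrm{std}})=\tfrac{\lambda\sqrt{19}}{2}\,E,\qquad E(x,y):=b(x,y)=\mathrm{Tr}_{K/\mathbb{Q}}\bigl(\theta^{-1}H_{\mathrm{std}}(x,y)\bigr),
\]
where $E$ is an alternating $\mathbb{Q}$-valued form on $V_{K}$, non-degenerate because $H_{\mathrm{std}}$ is. Thus to prove (2) it suffices to exhibit a $\Gamma$- and $\nu$-stable full lattice $\Lambda\subset V_{K}$ on which $E$ is $\mathbb{Z}$-valued and unimodular (and to identify it as the displayed one); once $E$ is unimodular it is onto $\mathbb{Z}$, so integrality plus unimodularity of $\tfrac{\lambda\sqrt{19}}{2}E$ force $\tfrac{\lambda\sqrt{19}}{2}=\pm1$, hence $=1$ as $\lambda>0$, i.e. $\lambda=\tfrac{2}{\sqrt{19}}$ — whence $\Theta$ is unique, with $c_{1}(\Theta)=\tfrac{2}{\sqrt{19}}\sum_{k}dx_{k}\wedge d\bar{x}_{k}$. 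Now $L_{0}$ is \emph{not} self-dual for $E$ — its discriminant is $\neq\pm1$, reflecting $\det(1-\tau|_{V})=\prod_{q}(1-\xi^{q})=\pm\theta$ and the ramification of $19$; concretely $w_{0}'$ lies in $L_{0}^{*}:=\{x\in V_{K}:E(x,L_{0})\subset\mathbb{Z}\}$ but not in $L_{0}$, since $(1-\tau)^{5}v_{0}\notin\theta L_{0}$ ($\tau$ is unipotent of index $9$ modulo $\theta$). The cure is the vectors $w_{k}'=\theta^{-1}(1-\tau)^{5}v_{k}$: because $9,\nu,\bar{\nu}$ all collapse to $9$ modulo $\theta$,
\[
H_{\mathrm{std}}\bigl((1-\tau)^{5}v_{j},v_{k}\bigr)=\sum_{i=0}^{5}\binom{5}{i}(-1)^{i}\nu_{j+i-k}\equiv 9\,(1-1)^{5}\equiv 0\pmod{\theta},
\]
so $\theta^{-1}(1-\tau)^{5}$ carries $L_{0}$ into the relevant dual; replacing $\mathcal{O}v_{k}$ by $\tfrac{\mathcal{O}}{\theta}w_{k}'$ for $k=0,1,2,3$ and keeping $\mathcal{O}v_{4},\dots,\mathcal{O}v_{8}$ should produce exactly the lattice $\Lambda$ of the statement — $\Gamma$- and $\nu$-stable and self-dual for $E$. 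Together with Part (1), this proves the theorem.

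\emph{Main obstacle.} The hard part is this last verification: that the displayed $\Lambda$ is really stable under $\tau$ and $\sigma$, that $E$ is $\mathbb{Z}$-valued on it, and that the $9\times 9$ Gram matrix of $H_{\mathrm{std}}$ over $\mathcal{O}$ on $(w_{0}',w_{1}',w_{2}',w_{3}',v_{4},\dots,v_{8})$ — with entries $H_{\mathrm{std}}(w_{j}',w_{k}')=\tfrac{1}{19}H_{\mathrm{std}}((2-\tau-\tau^{-1})^{5}v_{j},v_{k})$, $H_{\mathrm{std}}(w_{j}',v_{k})=\theta^{-1}H_{\mathrm{std}}((1-\tau)^{5}v_{j},v_{k})$, and $H_{\mathrm{std}}(v_{j},v_{k})=\nu_{j-k}$ — becomes, after taking $\nu$-parts and passing to a $\mathbb{Z}$-basis, a form of determinant $\pm1$. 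This comes down to understanding the operator $(1-\tau)^{5}$, equivalently the period relations among the $\xi^{q}$, modulo $\theta$ and modulo $\theta^{2}=(19)$ on the cyclic $\mathcal{O}[\langle\tau\rangle]$-module $L_{0}$; both the $\tau$-stability of $\Lambda$ and the $\mathbb{Z}$-valuedness of $E$ reduce to mod-$19$ congruences, and the exponent $5=\tfrac{19+1}{4}$ together with the coefficients $1,-5,10,-10,5,-1$ are exactly what make those congruences hold. I expect essentially all the genuine computation to sit there; everything else is formal given the facts above.
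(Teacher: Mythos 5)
Your setup is sound and the reduction is attractive: irreducibility of $V$ under $\langle\tau,\sigma\rangle$ does force any invariant Hermitian form to be $\lambda H_{\mathrm{std}}$, the identity $H_{\mathrm{std}}(v_{j},v_{k})=\nu_{j-k}$ is correct, and your mod-$\theta$ computation showing $H_{\mathrm{std}}((1-\tau)^{5}v_{j},v_{k})\equiv0\pmod{\theta}$ is exactly why the vectors $w_{k}'$ enter. But there are two genuine gaps. First, you explicitly defer the decisive verifications: that the displayed lattice is stable under $\tau$ and (much less obviously) under $\sigma$; that $E$ is $\mathbb{Z}$-valued on it, which for the generators $\frac{1}{1+2\nu}w_{j}'$ requires congruences modulo $\theta^{2}=(19)$ and not merely modulo $\theta$; and that the resulting Gram matrix has determinant $\pm1$. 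These are not routine bookkeeping --- they are the content of the theorem --- and ``should produce exactly the lattice of the statement'' is not a proof.

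Second, and more structurally, the theorem asserts not just that some stable self-dual lattice exists but that the period lattice of $A$ \emph{is} the displayed one, and that $\Theta$ is unique. Your framework never classifies the candidate lattices (and it silently assumes $\Lambda$ lies in the $K$-rational span of the $v_{k}$, which itself needs an argument), so even with the computations completed you would obtain existence but not uniqueness. The paper handles both points at once: it normalizes so that $H_{1}(A,\mathbb{Z})\cap\mathbb{C}v_{0}=\mathbb{Z}[\nu]v_{0}$ (using that $\mathbb{Z}[\nu]=\mathrm{End}(\mathbb{E})$ is a PID, where $\mathbb{E}$ is the image of $\sum_{j}\sigma^{j}$), traps $\Lambda$ between $\Lambda_{0}=\sum_{k}\mathbb{Z}[\nu]v_{k}$ and the larger lattice $\Lambda_{8}=\{z:\ell_{k}(z)\in\mathbb{Z}[\nu]\}$, observes that $\tau$ acts on $\Lambda_{8}/\Lambda_{0}\cong\mathbb{F}_{19}^{8}$ as a single unipotent Jordan block so that the stable intermediate lattices form a chain $\Lambda_{0}\subset\Lambda_{1}\subset\cdots\subset\Lambda_{8}$, and then computes the Pfaffians $P_{j}^{2}=a^{18}19^{8-2j}$, which force $j=4$ and $a=1$ simultaneously. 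That chain argument is the ingredient you would need to import (or replace) to recover the full statement; your trace-form description of the polarization could then serve as an alternative to the Pfaffian computation, but on its own it does not close the proof.
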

In order to prove Theorem \ref{thm:The-Abelian-variety}, we first
suppose that such a torus $A=V/\Lambda$ exists and find the necessary
condition for its existence. We then check immediately that these
conditions are also sufficient, and that they give us the uniqueness
of $A$.

Let $\Lambda\subset V$ be a lattice such that $\tau$ and $\sigma$
of $GL(V)$ are analytic representations of automorphisms of the torus
$A=V/\Lambda$. Let\[
\ell_{k}=\xi^{k}x_{1}+\xi^{4k}x_{2}+\xi^{9k}x_{3}+\xi^{16k}x_{4}+\xi^{6k}x_{5}+\xi^{17k}x_{6}+\xi^{11k}x_{7}+\xi^{7k}x_{8}+\xi^{5k}x_{9}.\]
Let $q$ be the endomorphism $q=\sum_{j=0}^{j=8}\sigma^{j}$. For
$z=\sum_{j=1}^{j=9}x_{j}e_{j}$, we have $q(z)=\ell_{0}(z)v_{0}$,
thus the image of $q$ is an elliptic curve $\mathbb{E}$ contained
in $A$.

The restriction of $q\circ\tau:A\rightarrow\mathbb{E}$ to $\mathbb{E}\hookrightarrow A$
is the multiplication by $\nu=\ell_{0}(\tau v_{0})$. Thus the endomorphism
group $\mbox{End }\mathbb{E}$ of $\mathbb{E}$ contains the ring
$\mathbb{Z}[\nu]$ ; since this is a maximal order, we have $\mbox{End }\mathbb{E}=\mathbb{Z}[\nu]$.
We remark that the ring $\mathbb{Z}[\nu]$ is one of the $9$ rings
of integers of quadratic fields that are Principal Ideal Domains,
see \cite{Masley}. Since $\mathbb{Z}[\nu]$ is a PID and $H_{1}(A,\mathbb{Z})\cap\mathbb{C}v_{0}$
is a rank one $\mathbb{Z}[\nu]$-module, there exists a constant $c\in\mathbb{C}^{*}$
such that\[
H_{1}(A,\mathbb{Z})\cap\mathbb{C}v_{0}=\mathbb{Z}[\nu]cv_{0}.\]
 Up to normalization of the $e_{j}$'s, we can suppose that $c=1$.

Let $\Lambda_{0}\subset V$ be the $\mathbb{Z}$-module generated
by the $v_{k},\, k\in\mathbb{Z}/19\mathbb{Z}$. The group $\Lambda_{0}$
is stable under the action of $\tau$ and $\Lambda_{0}\subset H_{1}(A,\mathbb{Z})=\Lambda$. 
\begin{lem}
\label{A est isomorphe a E9}The $\mathbb{Z}$-module $\Lambda_{0}\subset H_{1}(A,\mathbb{Z})$
is equal to the lattice:\[
R_{0}=\sum_{k=0}^{k=8}\mathbb{Z}[\nu]v_{0}.\]
\end{lem}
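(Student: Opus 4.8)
The plan is to show the two inclusions $\Lambda_0 \supseteq R_0$ and $\Lambda_0 \subseteq R_0$ separately, using the $\mathbb{Z}[\nu]$-module structure coming from $\tau$ together with the normalization $c=1$.

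For the inclusion $R_0 \subseteq \Lambda_0$: since $\tau$ acts on $\Lambda_0$ and $v_k = \tau^k v_0$, the $\mathbb{Z}$-module $\Lambda_0$ is by definition spanned by $v_0, v_1, \dots, v_{18}$, and because $\tau v_0 = \nu v_0 + (\text{something})$ — more precisely because $\ell_0(\tau^j v_0) $ runs through the powers relevant to expressing $\nu$ — I would verify that $\nu v_0 \in \Lambda_0$, hence $\mathbb{Z}[\nu] v_0 \subseteq \Lambda_0$. Concretely, from $\nu = \sum_{k=1}^{9}\xi^{k^2}$ and the definition $v_k = \sum_j \xi^{(\text{exp})k} e_j$, one gets an explicit $\mathbb{Z}$-linear relation $\nu v_0 = \sum_{k} a_k v_k$ with $a_k \in \mathbb{Z}$ (indeed $v_0 = \sum e_j$ and applying the ``sum over the quadratic-residue-indexed'' combination of the $\tau^k$ reproduces multiplication by $\nu$ on the line $\mathbb{C}v_0$, since $q\circ\tau$ restricted to $\mathbb{E}$ is multiplication by $\nu$). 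The same computation applied after $\tau^k$ gives $\mathbb{Z}[\nu] v_k \subseteq \Lambda_0$ for all $k$, so $R_0 = \sum_{k=0}^{8}\mathbb{Z}[\nu]v_k \subseteq \Lambda_0$.

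For the reverse inclusion $\Lambda_0 \subseteq R_0$: I would use the fact that the $19$ elements $v_0,\dots,v_{18}$ satisfy the obvious linear relations coming from $\sum_{k=0}^{18}\xi^{mk} = 0$ for $m \not\equiv 0$, and from the structure of the exponents $j^2 \bmod 19$. Since $1, 4, 9, 16, 6, 17, 11, 7, 5$ are precisely the nine quadratic residues mod $19$, the vectors $v_0, \dots, v_{18}$ all lie in the subspace spanned by $e_1,\dots,e_9$ and span a $\mathbb{Z}$-lattice that, as a $\mathbb{Z}[\nu]$-module (via $\tau$), is generated by $v_0, \dots, v_8$: indeed $v_9, \dots, v_{18}$ can be rewritten using the relation $\tau^{19} = 1$ and the minimal polynomial $\nu^2 + \nu + 5 = 0$ of $\nu$ over $\mathbb{Z}$, which lets one reduce $\tau^k v_0 = \nu^k v_0$-type expressions — wait, more carefully, one uses that $\Lambda_0$ is $\tau$-stable and that $\nu v_k$ and $\bar\nu v_k = (-1-\nu)v_k$ are integer combinations of the $v_\ell$'s (shown above), so $\Lambda_0$ is a $\mathbb{Z}[\nu]$-module containing $v_0, \dots, v_8$; since the $\mathbb{Z}[\nu]$-span of $v_0,\dots,v_8$ already contains all $v_k$ (as each $v_k$ is $\nu^{?}$ times... no — rather each $v_k$ for $k \ge 9$ equals a $\mathbb{Z}$-combination of $v_0,\dots,v_8$ after applying the $19$th-root relations), we conclude $\Lambda_0 = R_0$.

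I expect the main obstacle to be the second inclusion, specifically pinning down why the $\mathbb{Z}[\nu]$-span of $v_0, \dots, v_8$ contains $v_9, \dots, v_{18}$ — equivalently, why the $9 \times 9$ ``Vandermonde-like'' matrix with rows indexed by $e_1,\dots,e_9$ and columns $v_0, \dots, v_8$ has determinant a unit times a power of $\sqrt{-19}$, so that passing to $v_9,\dots,v_{18}$ introduces no new elements beyond $\mathbb{Z}[\nu]$-combinations. This is a finite but slightly delicate computation with Gauss sums: the relevant point is that $\sum_{k=0}^{18} \xi^{k^2 m}$ equals a Gauss sum (hence $\pm\sqrt{-19}$ up to sign depending on whether $m$ is a residue), which is exactly where $\nu$ and $\sqrt{-19} = 2\nu+1$ enter. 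Once these Gauss-sum identities are written down, both inclusions follow by linear algebra over $\mathbb{Z}[\nu]$, and the equality $\Lambda_0 = R_0$ drops out; combined with the earlier observation that $\mathrm{End}\,\mathbb{E} = \mathbb{Z}[\nu]$, this will also make transparent the claim in Theorem~\ref{thm:The-Abelian-variety}(1) that $A \cong E^9$.
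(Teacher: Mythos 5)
Your first inclusion is correct and is exactly the paper's argument: $\nu v_{0}=\sum_{k=1}^{9}v_{k^{2}}$ (because multiplying the quadratic residues mod $19$ by a fixed residue permutes them), hence $\nu v_{0}\in\Lambda_{0}$, and applying $\tau^{k}$ gives $\nu v_{k}\in\Lambda_{0}$ for all $k$, so $R_{0}\subseteq\Lambda_{0}$. (The normalization $c=1$ and the identification $\mathrm{End}\,\mathbb{E}=\mathbb{Z}[\nu]$ that you invoke are not needed here: the lemma compares two explicitly defined $\mathbb{Z}$-modules and is pure linear algebra over $\mathbb{Z}[\nu]$.)

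The reverse inclusion, which you correctly single out as the crux, is where your proposal has a genuine gap. First, your side-claim that each $v_{k}$ with $k\geq9$ is a \emph{$\mathbb{Z}$-combination} of $v_{0},\dots,v_{8}$ is false: since $v_{0},\dots,v_{8}$ form a $\mathbb{C}$-basis of $V$, the expression is unique, and it genuinely involves $\nu$ (the paper records $v_{9}=v_{0}+(1+\nu)v_{1}-2v_{2}+(1-\nu)v_{3}+(3+\nu)v_{4}+(-2+\nu)v_{5}-(2+\nu)v_{6}+2v_{7}+\nu v_{8}$); only a $\mathbb{Z}[\nu]$-combination holds, which is of course all you need. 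Second, the mechanism you propose --- that the Vandermonde-type determinant $\det(\xi^{j^{2}k})$ is a unit times a power of $\sqrt{-19}$ --- does not by itself put the coefficients of $v_{9}$ in $\mathbb{Z}[\nu]$ rather than merely in $19^{-a}\mathbb{Z}[\xi]$, and the two relations over $\mathbb{Z}[\nu]$ you can extract from the root-of-unity identities are far too few to handle $v_{9},\dots,v_{18}$. The fact that actually does the work is that the characteristic polynomial of $\tau$ on $V$ is $g(t)=\prod_{r\,\mathrm{QR}}(t-\xi^{r})$, the monic degree-$9$ factor of $\Phi_{19}$ over $\mathbb{Z}[\nu]$ (its coefficients are Gaussian periods, fixed by the index-two subgroup of the Galois group and integral, hence lie in $\mathbb{Z}[\nu]$); then $g(\tau)v_{0}=0$ gives $v_{9}=\tau^{9}v_{0}\in R_{0}$, and applying $\tau$ repeatedly yields $v_{k}\in R_{0}$ for all $k$, whence $\Lambda_{0}\subseteq R_{0}$. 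The paper short-circuits this by simply exhibiting the explicit relation for $v_{9}$ (and the analogous ones for $v_{10},\dots,v_{17}$), so your plan matches the paper's structure but stops short of the one computation, or the one conceptual input, that closes it.
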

\begin{proof}
We have $\nu v_{0}=\sum_{k=1}^{k=9}v_{k^{2}}$ hence $\nu v_{0}$
is an element of $\Lambda_{0}$. This implies that the vectors $\nu v_{k}=\tau{}^{k}\nu v_{0}$
are elements of $\Lambda_{0}$ for all $k$, hence: $R_{0}\subset\Lambda_{0}$.
Conversely, we have:\[
v_{9}=v_{0}+(1+\nu)v_{1}-2v_{2}+(1-\nu)v_{3}+(3+\nu)v_{4}+(-2+\nu)v_{5}-(2+\nu)v_{6}+2v_{7}+\nu v_{8}\]
 and similar formulas for $v_{10},\dots,v_{17}$. This proves that
the lattice $R_{0}$ contains the vectors $v_{k}=\tau{}^{k}v_{0}$
generating $\Lambda_{0}$, thus: $R_{0}=\Lambda_{0}$. 
\end{proof}
Now we construct a lattice that contains $H_{1}(A,\mathbb{Z})$. Let
be $k\in\mathbb{Z}/19\mathbb{Z}$. The image of $z\in V$ by the endomorphism
$q\circ\tau^{k}:V\to V$ is\[
q\circ\tau^{k}(z)=\ell_{k}(z)v_{0}.\]
 Let be $\lambda\in H_{1}(A,\mathbb{Z})$. Since $H_{1}(A,\mathbb{Z})\cap\mathbb{C}v_{0}=\mathbb{Z}[\nu]v_{0},$
the scalar $\ell_{k}(\lambda)$ is an element of $\mathbb{Z}[\nu]$.
Let \[
\Lambda_{8}=\{z\in V\,|\,\ell_{k}(z)\in\mathbb{Z}[\nu],\,0\leq k\leq8\}.\]

\begin{lem}
\label{le reseau lambda 4}The $\mathbb{Z}$-module $\Lambda_{8}\supset H_{1}(A,\mathbb{Z})$
is the lattice:\[
\sum_{k=0}^{k=7}\frac{\mathbb{Z}[\nu]}{1+2\nu}(v_{k}-v_{k+1})+\mathbb{Z}[\nu]v_{0}.\]
 Moreover $\tau$ stabilizes $\Lambda_{8}$.\end{lem}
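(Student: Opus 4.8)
The plan is to pin down $\Lambda_8$ by combining the two defining constraints: membership in the ambient period lattice forces $\ell_k(\lambda)\in\mathbb{Z}[\nu]$ for $0\le k\le 8$, and conversely $\Lambda_8$ is exactly the $\mathbb{Z}[\nu]$-module (via the $\mathbb{Z}[\nu]$-action coming from $\tau$ on the $v_k$) cut out by these nine linear forms. First I would record the key evaluations $\ell_k(v_m)$: since $v_m=\tau^m(e_1+\dots+e_9)$ and $\ell_k$ is the linear form dual to $v_k$ in the sense that $q\circ\tau^k(z)=\ell_k(z)v_0$, one gets $\ell_k(v_m)=\sum_{j=1}^9 \xi^{(k+m)j^2}$, which equals $\nu$-type Gauss sums: it is $9$ when $k+m\equiv 0$, and equals $\nu$ or $\bar\nu=-1-\nu$ according to whether $k+m$ is a nonzero square mod $19$ or a non-square. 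In particular all the $\ell_k(v_m)$ lie in $\mathbb{Z}[\nu]$, reproving $R_0\subset\Lambda_8$, and $\ell_k(v_m-v_{m+1})$ is a difference of two such values, hence again in $\mathbb{Z}[\nu]$.

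Next I would show the proposed lattice $L:=\sum_{k=0}^{7}\frac{\mathbb{Z}[\nu]}{1+2\nu}(v_k-v_{k+1})+\mathbb{Z}[\nu]v_0$ is contained in $\Lambda_8$. The only thing to check is that $\ell_k\big(\frac{1}{1+2\nu}(v_m-v_{m+1})\big)\in\mathbb{Z}[\nu]$ for $0\le k\le 8$ and $0\le m\le 7$; equivalently $\ell_k(v_m)-\ell_k(v_{m+1})\in(1+2\nu)\mathbb{Z}[\nu]$. Note $(1+2\nu)(1+2\bar\nu)=1+2(\nu+\bar\nu)+4\nu\bar\nu=1-2+4\cdot 5=19$, so $(1+2\nu)$ is the prime above $19$ in $\mathbb{Z}[\nu]$, and $\mathbb{Z}[\nu]/(1+2\nu)\cong\mathbb{F}_{19}$ with $\nu\equiv$ the unique square root of $5$ (equivalently $\nu\equiv 9$, $\bar\nu\equiv 9$ as well modulo the prime — more precisely $\nu\bar\nu=5$, $\nu+\bar\nu=-1$, so mod $1+2\nu$ both reduce to $9$). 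Then $\ell_k(v_m)\bmod(1+2\nu)$ depends only on whether $k+m$ is zero/square/non-square, taking values $9,9,-10\equiv 9\pmod{19}$ — i.e. the difference $\ell_k(v_m)-\ell_k(v_{m+1})$ always reduces to $0$ mod $1+2\nu$. This gives $L\subseteq\Lambda_8$, and since the $v_k$ for all $k\in\mathbb{Z}/19$ lie in $L$ (using the explicit relations already established in Lemma \ref{A est isomorphe a E9}, which express $v_9,\dots,v_{17}$ in terms of $v_0,\dots,v_8$, hence differences telescope into $L$), we also get $R_0\subseteq L$.

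For the reverse inclusion $\Lambda_8\subseteq L$, I would argue by index. Both $R_0$ and $L$ and $\Lambda_8$ are rank-$18$ lattices (rank $9$ over $\mathbb{Z}[\nu]$) containing $R_0$. The forms $\ell_0,\dots,\ell_8$ are $\mathbb{C}$-linearly independent (the matrix $(\xi^{kj^2})$ is a submatrix of a Vandermonde-type/character matrix and is invertible), so $\Lambda_8/R_0$ embeds into $(\mathbb{C}/\mathbb{Z}[\nu])^9$ via $z\mapsto(\ell_k(z))_{0\le k\le 8}$ modulo $\mathbb{Z}[\nu]$, and one computes its image. Concretely $[\Lambda_8:R_0]=\prod_k|\text{image of }\ell_k\text{ on }R_0\text{ in }\mathbb{C}/\mathbb{Z}[\nu]|$-type count: since $\ell_k(R_0)=\mathbb{Z}[\nu]$ (it contains $\ell_k(v_{-k})=9$ and $\ell_k(v_{1-k})\in\{\nu,\bar\nu\}$, and $\gcd$ considerations using $9$ and $\nu$ with $N(\nu)=5$, together with $19=N(1+2\nu)$, force the $\mathbb{Z}[\nu]$-span to be all of $\mathbb{Z}[\nu]$)... rather, the clean route: $\Lambda_8$ is the dual-type lattice $\{z:\ell_k(z)\in\mathbb{Z}[\nu]\}$, and from the relation $(1+2\nu)w'_k=v_k-5v_{k+1}+10v_{k+2}-10v_{k+3}+5v_{k+4}-v_{k+5}$ one sees directly that $\frac{1}{1+2\nu}(v_k-v_{k+1})$-type vectors are the maximal denominators allowed, because any $z\in\Lambda_8$ has all $\ell_k(z)\in\mathbb{Z}[\nu]$ and $19z\in R_0$ (as $19=(1+2\nu)(1+2\bar\nu)$ kills the torsion), so $\Lambda_8/R_0$ is a quotient of $(R_0/19R_0)$, an $\mathbb{F}_{19}[\nu]=\mathbb{F}_{19}\times\mathbb{F}_{19}$-module, and computing the $\mathbb{F}_{19}$-rank of the relations $\ell_k\equiv 0$ gives exactly the claimed quotient. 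I expect \emph{this index computation} — showing $\Lambda_8$ is not strictly larger than $L$, i.e. that no further $\frac{1}{1+2\nu}$-denominators or other fractional combinations satisfy all nine congruences — to be the main obstacle, and I would handle it by an explicit linear-algebra computation over $\mathbb{F}_{19}$ of the rank of the $9\times 9$ matrix $(\ell_k(v_m)\bmod (1+2\nu))$, which by the Gauss-sum values above is the all-$9$'s matrix plus corrections, of rank $8$, yielding a one-dimensional-per-conjugate kernel that matches $\dim(L/R_0)$. Finally, $\tau$-stability is immediate: $\tau$ permutes the $v_k$ cyclically ($v_k\mapsto v_{k+1}$), hence permutes the differences $v_k-v_{k+1}$ and fixes the $\mathbb{Z}[\nu]$-structure, so it stabilizes both $R_0$ and $L=\Lambda_8$; the inclusion $\Lambda_8\supseteq H_1(A,\mathbb{Z})$ was already established before the lemma statement since $\ell_k(\lambda)\in\mathbb{Z}[\nu]$ for every $\lambda\in H_1(A,\mathbb{Z})$.
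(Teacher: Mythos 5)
Your forward inclusion is correct and arguably cleaner than the paper's route: the Gauss--sum evaluation $\ell_k(v_m)=\sum_{j=1}^{9}\xi^{(k+m)j^2}\in\{9,\nu,\bar\nu\}$, together with the observation that all three values are congruent modulo $(1+2\nu)$, does show $\tfrac{1}{1+2\nu}(v_m-v_{m+1})\in\Lambda_8$ --- in fact for \emph{every} $m\in\mathbb{Z}/19\mathbb{Z}$, which is also what repairs your $\tau$-stability claim ($\tau$ sends $v_7-v_8$ to $v_8-v_9$, which is not one of your eight listed generators, so ``$\tau$ permutes the differences'' is not by itself enough; the paper handles this with the explicit relation for $v_9-v_8$). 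The genuine gap is the reverse inclusion, which you yourself flag as the main obstacle but do not carry out, and your sketch of it is internally inconsistent. By your own computation every entry $\ell_k(v_m)$ reduces to $9$ modulo $(1+2\nu)$, so the matrix $\bigl(\ell_k(v_m)\bmod(1+2\nu)\bigr)$ is exactly the all-$9$'s matrix: it has rank $1$ and an $8$-dimensional kernel, not ``rank $8$'' with a one-dimensional kernel as you assert. The rank-$1$ statement is the one you actually need (its kernel is the hyperplane $\sum_m c_m\equiv 0$, matching $\dim_{\mathbb{F}_{19}}L/R_0=8$), but it only identifies $\Lambda_8\cap\tfrac{1}{1+2\nu}R_0$ with $L$; it says nothing about elements of $\Lambda_8$ with worse denominators. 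Your substitute for ruling those out --- ``$19z\in R_0$ because $19=(1+2\nu)(1+2\bar\nu)$ kills the torsion'' --- is unjustified (it presupposes the structure of $\Lambda_8/R_0$) and rests on a wrong arithmetic fact: $1+2\bar\nu=-(1+2\nu)$, so $19=-(1+2\nu)^2$ is \emph{ramified} in $\mathbb{Z}[\nu]$ and $\mathbb{Z}[\nu]/19\cong\mathbb{F}_{19}[\epsilon]/(\epsilon^2)$, not $\mathbb{F}_{19}\times\mathbb{F}_{19}$.

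What is missing is precisely the index computation $[\Lambda_8:R_0]=|N(\det(\ell_k(v_m)))|=19^8$, after which $L\subseteq\Lambda_8$ and $[L:R_0]=19^8$ force equality. This is accessible along your own lines: $(\ell_k(v_m))=P\,{}^tP$ with $P=(\xi^{kj^2})_{0\le k\le 8,\,1\le j\le 9}$ a Vandermonde matrix in the $\xi^{j^2}$, so $\det P=\prod_{j<j'}(\xi^{j'^2}-\xi^{j^2})$ is a product of $36$ generators of the totally ramified prime $(1-\xi)$ of $\mathbb{Z}[\xi]$; hence $(\det P\,{}^tP)=(1-\xi)^{72}=(19)^4=(1+2\nu)^8$ as ideals of $\mathbb{Z}[\nu]$. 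Supplying this step (or, as the paper does, simply computing the dual basis $\ell_0^*,\dots,\ell_8^*$ explicitly in the basis $v_0,\dots,v_8$ and reading off the lattice) would close the gap; as written, the proposal proves only $L\subseteq\Lambda_8$.
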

\begin{proof}
Let $\ell_{0}^{*},\dots,\ell_{8}^{*}$ be the basis dual to $\ell_{0},\dots,\ell_{8}$.
By definition, the $\mathbb{Z}[\nu]$-module $\Lambda_{8}$ is $\oplus_{i=0}^{i=8}\mathbb{Z}[\nu]\ell_{i}^{*}$.
By expressing the $\ell_{i}^{*}$ in the basis $v_{0},\dots,v_{8}$
we obtain the lattice $\Lambda_{8}$. Using the formula: \[
\begin{array}{c}
v_{9}-v_{8}=v_{0}+(1+\nu)v_{1}-2v_{2}+(1-\nu)v_{3}+(3+\nu)v_{4}\\
+(\nu-2)v_{5}-(2+\nu)v_{6}+2v_{7}+(\nu-1)v_{8},\end{array}\]
one can check that $v_{9}-v_{8}$ is a $\mathbb{Z}[\nu]$-linear combination
of $(2\nu+1)v_{0}$ and the $v_{k}-v_{k+1}$ for $k=0,\dots,7$. Therefore
$\tau(\frac{1}{2\nu+1}(v_{8}-v_{7}))=\frac{1}{2\nu+1}(v_{9}-v_{8})$
is in $\Lambda_{8}$ and $\Lambda_{8}$ is stable by $\tau$. 
\end{proof}
We denote by $\phi:\Lambda_{8}\rightarrow\Lambda_{8}/\Lambda_{0}$
the quotient map. The ring $\mathbb{Z}[\nu]/(1+2\nu)$ is the finite
field with $19$ elements. The quotient $\Lambda_{8}/\Lambda_{0}$
is a $\mathbb{Z}[\nu]/(1+2\nu)$-vector space with basis $t_{1},\dots,t_{8}$
such that $t_{i}=\frac{1}{1+2\nu}(v_{i-1}-v_{i})+\Lambda_{0}$.

Let $R$ be a lattice such that : $\Lambda_{0}\subset R\subset\Lambda_{8}$.
The group $\phi(R)$ is a sub-vector space of $\Lambda_{8}/\Lambda_{0}$
and:\[
\phi^{-1}\phi(R)=R+\Lambda_{0}=R.\]
 The set of such lattices $R$ correspond bijectively to the set of
sub-vector spaces of $\Lambda_{8}/\Lambda_{0}$. \\
 Because the automorphism $\tau$ preserves $\Lambda_{0}$, it
induces an automorphism $\widehat{\tau}$ on the quotient $\Lambda_{8}/\Lambda_{0}$
such that $\phi\circ\tau=\widehat{\tau}\circ\phi$. As $\tau$ stabilizes
$H_{1}(A,\mathbb{Z})$, the vector space $\phi(H_{1}(A,\mathbb{Z}))$
is stable by $\widehat{\tau}$. Let\[
w_{8-k}=(-1)^{k}\sum_{j=0}^{j=k}\left(\begin{array}{c}
k\\
j\end{array}\right)(-1)^{j}t_{j+1},\, k=0\dots7.\]
Then $w_{i-1}=\widehat{\tau}w_{i}-w_{i}$ for $i>1$ and $\widehat{\tau}w_{1}=w_{1}$.
The matrix of $\widehat{\tau}$ in the basis $w_{1},.\ldots,w_{8}$
is the size $8\times8$ matrix:\[
\left(\begin{array}{ccccc}
1 & 1 & 0 & \ldots & 0\\
0 & \ddots & \ddots & \ddots & \vdots\\
\vdots & \ddots & \ddots & \ddots & 0\\
\vdots &  & \ddots & \ddots & 1\\
0 & \ldots & \ldots & 0 & 1\end{array}\right)\]
 The sub-spaces stable by $\widehat{\tau}$ are the spaces $W_{j},\,1\leq j\leq8$
generated by $w_{1},\dots,w_{j}$ and $W_{0}=\{0\}$. Let $\Lambda_{j}$
be the lattice $\phi^{-1}W_{j}$. It is easy to check that, as a lattice
in $\mathbb{C}^{9}$, the $\Lambda_{j}$ are all isomorphic to $\mathbb{Z}[\nu]^{9}$.
Since $\Lambda=H_{1}(A,\mathbb{Z})$ is stable by $\tau$, we have
proved that :
\begin{lem}
\label{lem:The-torus-}The torus $A=V/\Lambda$ exists, it is an Abelian
variety isomorphic to $E^{9}$. There exists $j\in\{0,1,\cdots,8\}$
such that $\Lambda=\Lambda_{j}$. 
\end{lem}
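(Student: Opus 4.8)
The plan is to note that the classification has essentially been completed above, so the statement follows by reading it off and supplementing it with one easy direct construction for the existence part.

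First I would take $A=V/\Lambda$ as in the standing hypothesis, so that $\Lambda=H_{1}(A,\mathbb{Z})$ is a lattice in $V$ stable under $\tau$ and $\sigma$ with $\Lambda\cap\mathbb{C}v_{0}=\mathbb{Z}[\nu]v_{0}$. By Lemmas~\ref{A est isomorphe a E9} and~\ref{le reseau lambda 4} we have $\Lambda_{0}\subseteq\Lambda\subseteq\Lambda_{8}$, hence $\phi(\Lambda)$ is an $\mathbb{F}_{19}$-subspace of the $8$-dimensional space $\Lambda_{8}/\Lambda_{0}$, it is stable under $\widehat{\tau}$ since $\Lambda$ is $\tau$-stable, and $\Lambda=\phi^{-1}\phi(\Lambda)$ because $\Lambda\supseteq\Lambda_{0}$. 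Thus $\Lambda$ is recovered from a $\widehat{\tau}$-stable subspace of $\Lambda_{8}/\Lambda_{0}$. As computed just above, $\widehat{\tau}$ is a single $8\times8$ unipotent Jordan block, so its invariant subspaces are exactly the iterated kernels $W_{j}=\ker(\widehat{\tau}-\mathrm{id})^{j}=\langle w_{1},\dots,w_{j}\rangle$ for $0\leq j\leq8$. Therefore $\phi(\Lambda)=W_{j}$, i.e. $\Lambda=\Lambda_{j}$, for some such $j$, which is the last assertion.

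Next I would verify that $V/\Lambda_{j}\cong E^{9}$ for every $j$; applied to the $j$ just found this gives that $A$ is an abelian variety isomorphic to $E^{9}$. The lattice $\Lambda_{j}$ lies between $\Lambda_{0}$ and $\Lambda_{8}$, which are of finite index in one another, so it is a full lattice in $V$; and since $\Lambda_{8}/\Lambda_{0}$ is annihilated by $1+2\nu$, an $\mathbb{F}_{19}$-subspace of it is the same thing as a $\mathbb{Z}[\nu]$-submodule, so $\Lambda_{j}=\phi^{-1}(W_{j})$ is a $\mathbb{Z}[\nu]$-submodule of $V$. As $\mathbb{Z}[\nu]$ is a PID, $\Lambda_{j}$ is free of rank $9$ over $\mathbb{Z}[\nu]$, and tensoring a $\mathbb{Z}[\nu]$-basis of $\Lambda_{j}$ with $\mathbb{R}$ (using $\mathbb{Z}[\nu]\otimes_{\mathbb{Z}}\mathbb{R}\cong\mathbb{C}$) shows that such a basis is also a $\mathbb{C}$-basis of $V$; hence $V/\Lambda_{j}\cong(\mathbb{C}/\mathbb{Z}[\nu])^{9}=E^{9}$, an abelian variety. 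For existence it then remains to exhibit one admissible lattice, and I would take $\Lambda=\Lambda_{0}$: it is stable under $\tau$ by definition and under $\sigma$ too, for $\sigma$ permutes the basis $e_{1},\dots,e_{9}$ and hence fixes $v_{0}$, while $\sigma\tau\sigma^{-1}$ is a power of $\tau$, so $\sigma(v_{k})=\sigma\tau^{k}(v_{0})$ is again one of the $v_{m}$. Thus $V/\Lambda_{0}$ is a torus of the required type, which completes the proof.

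I do not expect a genuine obstacle in this lemma: the two computational ingredients — the change of basis from the $t_{i}$ to the $w_{i}$ putting $\widehat{\tau}$ in Jordan form, and the $\mathbb{Z}[\nu]$-linear identities among the $v_{k}$ behind the inclusions $\Lambda_{0}\subseteq\Lambda\subseteq\Lambda_{8}$ and behind the $\sigma$-action — are elementary and have already been carried out in Lemmas~\ref{A est isomorphe a E9}--\ref{le reseau lambda 4}. The only delicate point, namely pinning down the single value of $j$, is not part of this statement; it will be settled when the $\sigma,\tau$-invariant principal polarization is constructed in the remainder of the proof of Theorem~\ref{thm:The-Abelian-variety}.
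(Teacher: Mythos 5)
Your argument is correct and follows the paper's own route exactly: the preceding lemmas sandwich $\Lambda$ between $\Lambda_{0}$ and $\Lambda_{8}$, and the classification of $\widehat{\tau}$-stable subspaces of the single unipotent Jordan block forces $\Lambda=\Lambda_{j}$. You merely make explicit two points the paper leaves as remarks --- that each $\Lambda_{j}$ is a free rank-$9$ $\mathbb{Z}[\nu]$-module (hence $V/\Lambda_{j}\cong E^{9}$) and that $\Lambda_{0}$ itself is $\tau$- and $\sigma$-stable, settling existence --- both of which check out.
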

Let $w_{0}',\dots,w_{3}'$ be the vectors defined by:\[
w_{0}'=\frac{1}{1+2\nu}(v_{0}-5v_{1}+10v_{2}-10v_{3}+5v_{4}-v_{5})=\frac{1}{1+2\nu}(1-\tau)^{5}v_{0}\]
 and $w_{k}'=\tau^{k}w'_{0}$ (we have $\phi(w'_{0})=w_{4}$). Let
us suppose that $A=V/\Lambda$ has moreover a principal polarization
$\Theta$ that is invariant under the action of $\tau$ and $\sigma$.
Then:
\begin{lem}
\label{thmThe-attice}The lattice $H_{1}(A,\mathbb{Z})$ is equal
to $\Lambda_{4}$, and\[
\Lambda_{4}=\frac{\mathbb{Z}[\nu]}{1+2\nu}w_{0}'+\frac{\mathbb{Z}[\nu]}{1+2\nu}w_{1}'+\frac{\mathbb{Z}[\nu]}{1+2\nu}w_{2}'+\frac{\mathbb{Z}[\nu]}{1+2\nu}w_{3}'+\bigoplus_{k=4}^{8}\mathbb{Z}[\nu]v_{k}.\]
The Hermitian matrix associated to $\Theta$ is equal to $\frac{2}{\sqrt{19}}I_{9}$
in the basis $e_{1},\dots,e_{9}$ and $c_{1}(\Theta)=\frac{i}{\sqrt{19}}\sum_{k=1}^{k=9}dx_{k}\wedge d\bar{x}_{k}$.\end{lem}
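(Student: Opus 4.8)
The plan is to use the $\sigma,\tau$-invariance to reduce the polarization to a single scalar, and then to pin down that scalar and the index $j$ by combining integrality and unimodularity of its imaginary part on the chain $\Lambda_{0}\subset\Lambda_{j}\subset\Lambda_{8}$. \textit{Step 1 (the Hermitian form).} A polarization $\Theta$ on $A=V/\Lambda$ is a positive definite Hermitian form $H$ on $V$ whose imaginary part $E=\operatorname{Im}H$ is integral and unimodular on $\Lambda$, and the $\tau,\sigma$-invariance of $\Theta$ amounts to $H(\tau x,\tau y)=H(x,y)$ and $H(\sigma x,\sigma y)=H(x,y)$. Since $\tau$ acts on $e_{1},\dots,e_{9}$ by the pairwise distinct scalars $\xi^{j^{2}}$ (the nine quadratic residues modulo $19$), a $\tau$-invariant Hermitian form is diagonal in this basis; and since $6$ has order $9$ modulo $19$, the map $j\mapsto|6j|$ is a $9$-cycle, so $\sigma$-invariance forces all diagonal entries equal. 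Hence $H=h\,I_{9}$ for a real $h>0$, and everything reduces to determining $h$ and $\Lambda$.

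\textit{Step 2 (the form on $\Lambda_{0}$).} Using $v_{k}=\tau^{k}v_{0}$ and the $\tau$-invariance of $H$ one gets $H(v_{k},v_{l})=h\sum_{j=1}^{9}\xi^{(k-l)r_{j}}$ with $r_{1},\dots,r_{9}$ the quadratic residues; by the Gauss-sum identity $\sum_{r}\xi^{r}=\nu$ (sum over the nine residues) this equals $9h$ when $k\equiv l$ and $h\nu$ or $h\bar\nu$ according as $k-l$ is a residue or not. As $\operatorname{Im}\nu=\tfrac{\sqrt{19}}{2}$, the Gram matrix of $E$ on the $\mathbb{Z}$-basis $\{v_{k},\nu v_{k}\}_{0\le k\le 8}$ of $\Lambda_{0}$ equals $\tfrac{h\sqrt{19}}{2}\,S_{0}$ for a fixed integral skew matrix $S_{0}$ depending only on the residue pattern mod $19$; since $S_{0}$ has an entry equal to $\pm1$, integrality of $E$ on $\Lambda_{0}$ forces $a:=\tfrac{h\sqrt{19}}{2}$ to be a positive integer. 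The crucial arithmetic input is the evaluation $|\operatorname{Pf}S_{0}|=19^{4}$, equivalently $|\det_{\mathbb{Z}[\nu]}M|=19^{4}$ for the Hermitian $\mathbb{Z}[\nu]$-matrix $M=(H(v_{k},v_{l})/h)=9I+\nu P+\bar\nu Q$, where $P[k,l]=1$ (resp. $Q[k,l]=1$) iff $k-l$ is a residue (resp. a non-residue) mod $19$. I would obtain it by noting $P^{t}=Q$ and $P+Q=J-I$ (all-ones minus identity), so that modulo the ramified prime $\mathfrak{p}=(1+2\nu)$ above $19$ one has $\nu\equiv\bar\nu\equiv 9$ and hence $M\equiv 9J$, a rank-one matrix; this forces $\mathfrak{p}^{8}=(19^{4})$ to divide $\det_{\mathbb{Z}[\nu]}M$, while the opposite bound $v_{\mathfrak{p}}(\det_{\mathbb{Z}[\nu]}M)=8$ follows from exhibiting minors of $\mathfrak{p}$-valuation $1$ (for instance the $\{0,1\}\times\{1,2\}$ minor of $M$ equals $-4(1+2\nu)$), or simply from computing this explicit $9\times 9$ determinant.

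\textit{Step 3 (conclusion).} By Lemma \ref{lem:The-torus-} one has $\Lambda=\Lambda_{j}$ with $[\Lambda_{j}:\Lambda_{0}]=19^{j}$, $0\le j\le 8$. Unimodularity of $E$ on $\Lambda_{j}$ gives $|\operatorname{Pf}(E|_{\Lambda_{0}})|=19^{j}$, whereas Step 2 gives $|\operatorname{Pf}(E|_{\Lambda_{0}})|=a^{9}|\operatorname{Pf}S_{0}|=a^{9}\,19^{4}$; comparing, $a^{9}\,19^{4}=19^{j}$ with $a\in\mathbb{Z}_{>0}$ and $j\le 8$ is possible only for $a=1$, $j=4$. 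Hence $h=\tfrac{2}{\sqrt{19}}$, the Hermitian matrix of $\Theta$ is $\tfrac{2}{\sqrt{19}}I_{9}$, and $\Lambda=\Lambda_{4}$. The displayed description of $\Lambda_{4}$ then comes from $\phi(w'_{0})=w_{4}$, already noted in the excerpt: the classes $\phi(w'_{k})=\widehat{\tau}^{\,k}w_{4}$ for $k=0,1,2,3$ span $W_{4}=\Lambda_{4}/\Lambda_{0}$, so $\Lambda_{4}=\Lambda_{0}+\sum_{k=0}^{3}\mathbb{Z}[\nu]w'_{k}$, and eliminating $v_{0},\dots,v_{3}$ through the relations $(1+2\nu)w'_{k}=v_{k}-5v_{k+1}+10v_{k+2}-10v_{k+3}+5v_{k+4}-v_{k+5}$ rewrites this in the stated form. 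Finally $c_{1}(\Theta)$ is the $(1,1)$-form attached to $\tfrac{2}{\sqrt{19}}I_{9}$, namely $\tfrac{i}{\sqrt{19}}\sum_{k}dx_{k}\wedge d\bar{x}_{k}$.

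The main obstacle is the determinant evaluation in Step 2; it is the only place where a genuine computation with the quadratic residues mod $19$ is unavoidable. One can lighten the case analysis over $j$: granting that the discriminant group $\Lambda_{0}^{\vee}/\Lambda_{0}$ (for $a=1$) has order $19^{8}$, its discriminant form is nondegenerate and $\widehat{\tau}$-equivariant, so $W_{4}^{\perp}$ is a $\widehat{\tau}$-stable subspace of dimension $8-4=4$, hence $W_{4}^{\perp}=W_{4}$ by the classification of $\widehat{\tau}$-stable subspaces; this shows directly that $E$ is integral and unimodular on $\Lambda_{4}$. That reformulation, however, still rests on the same computational fact that $v_{\mathfrak{p}}(\det_{\mathbb{Z}[\nu]}M)$ equals $8$ and not a larger even number.
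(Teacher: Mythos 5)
Your proof is correct and follows essentially the same route as the paper: invariance under $\tau$ and $\sigma$ forces $H=a\cdot\frac{2}{\sqrt{19}}I_{9}$ with $a$ a positive integer, and a Pfaffian computation on the chain $\Lambda_{0}\subset\Lambda_{j}\subset\Lambda_{8}$ (the paper states it as $P_{j}^{2}=a^{18}19^{8-2j}$, you as $a^{9}19^{4}=19^{j}$ via the index $[\Lambda_{j}:\Lambda_{0}]=19^{j}$ --- the same identity) pins down $a=1$ and $j=4$. The only caution is that your proposed shortcut for the upper bound on $v_{\mathfrak p}(\det M)$ via a single $2\times2$ minor does not by itself control the largest elementary divisor, but you correctly fall back on the explicit determinant evaluation, which is exactly the computation the paper also performs.
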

\begin{proof}
Let $H$ be the matrix of the Hermitian form of the polarization $\Theta$
in the basis $e_{1},\dots,e_{9}$. Since $\tau$ preserves the polarization
$\Theta$, this implies that:\[
^{t}\tau H\bar{\tau}=H\]
where $\bar{\tau}$ is the matrix whose coefficients are conjugated
of those of $\tau$. The only Hermitian matrices that verify this
equality are the diagonal matrices. By the same reasoning with $\sigma$
instead of $\tau$, we obtain that these diagonal coefficients are
equal, and:\[
H=a\frac{2}{\sqrt{19}}I_{9},\]
 where $a$ is a positive real ($H$ is a positive definite Hermitian
form). As $H$ is a polarization, the alternating form $c_{1}(\Theta)=\Im m(H)$
takes integer values on $H_{1}(A,\mathbb{Z})$, hence $\Im m(^{t}v_{2}H\bar{v}_{1})=a$
is an integer. 

Let $c_{1}(\Theta)=\Im m(H)=i\frac{a}{\sqrt{19}}\sum dx_{k}\wedge d\bar{x}_{k}$
be the alternating form of the principal polarization $\Theta$. Let
$\lambda_{1},\dots,\lambda_{18}$ be a basis of a lattice $\Lambda'$.
By definition, the square of the Pfaffian $Pf_{\Theta}(\Lambda')$
of $\Lambda'$ is the determinant of the matrix\[
M_{\Lambda'}=\left(c_{1}(\Theta)(\lambda_{j},\lambda_{k})\right)_{1\leq j,k\leq18}.\]
 Since $\Theta$ is a principal polarization on $A$, we have $Pf_{\Theta}(H_{1}(A,\mathbb{Z}))=1$.\\
For $j\in\{0,\dots,8\}$, it is easy to find a basis of the lattice
$\Lambda_{j}$. We compute that\[
P_{j}^{2}=a^{18}19^{8-2j}\]
where $P_{j}$ its Pfaffian of $\Lambda_{j}$. As $a$ is positive,
the only possibility that $P_{j}$ equals $1$ is $j=4$ and $a=1$.
Moreover since all coefficients of the matrix $M_{\Lambda_{4}}$ are
integers, the alternating form $\frac{i}{\sqrt{19}}\sum dx_{k}\wedge d\bar{x}_{k}$
defines a principal polarization on $A$, see \cite[Chap. 4.1]{Birkenhake}.
\end{proof}
Theorem \ref{thm:The-Abelian-variety} follows from Lemmas \ref{lem:The-torus-}
and \ref{thmThe-attice}.
\begin{rem}
One can compute that the involution $\mu$ acts on the principally
polarized Abelian variety $(A,\Theta)$, and therefore that $\mbox{Aut}(A,\Theta)$
contains the group $PSL_{2}(\mathbb{F}_{19})$.
\end{rem}
{}
\begin{rem}
In\cite{Beauville2}, Beauville proves that the intermediate Jacobian
of the $S_{6}$-symmetric quartic threefolds $F$ is not a product
of Jacobians of curves (see also \cite{Beauville}). That implies,
using Clemens-Griffiths results for threefolds, the irrationality
of $F$. Using the arguments at the end of \cite{Beauville2}, one
can see that the principally polarized abelian ninefold ($A,\Theta)$
is not a product of Jacobians of curves.
\end{rem}

\section{Intermediate Jacobians of the Adler cubic sevenfold}

\subsection{Intermediate Jacobian}

For a smooth algebraic complex manifold $X$ of dimension $n=2k+1$,
let $H^{p,q}(X)$ be the Hodge $(p,q)$-cohomology spaces $H^{p,q}(X)=H^{q}(X,\Omega^{p})$,
and $h^{p,q}$ = dim $H^{p,q}(X)$ be the Hodge numbers of $X$, $0\le p+q\le2n=4k+2$.
The intersection of real $(2k+1)$-dimensional chains in $X$ defines
an integer valued quadratic form $Q_{G}$ on the integer cohomology
$H_{2k+1}(X,\mathbb{Z})$, and yields an embedding of $H_{2k+1}(X,\mathbb{Z})$
as an integer lattice in the dual space of $H^{2k+1,0}(X)+H^{2k,1}(X)+...+H^{k+1,k}(X)$.
The quotient compact complex torus\[
J_{G}(X)=(H^{2k+1,0}(X)+H^{2k,1}(X)+...+H^{k+1,k}(X))^{*}/H_{2k+1}(X,\mathbb{Z})\]
 is the \emph{Griffiths intermediate Jacobian} of the $n$-fold $X$,
see \cite[p.123]{Carlson}. By the Riemann-Hodge bilinear relations,
the quadratic form $Q_{G}$ is definite on any space $H^{p,n-p}(X)$,
and has opposite signs on $H^{p,n-p}(X)$ and $H^{p',n-p'}(X)$ if
and only if $p-p'$ odd. Moreover $\Im m(Q_{G})$ takes integral values
on $H_{2k+1}(X,\mathbb{Z})$. We call the pair $(J_{G}(X),Q_{G})$
the polarized Griffiths intermediate Jacobian. 

The quadratic form $Q_{G}$ gives the polarized torus $(J_{G}(X),Q_{G})$
a structure of a polarized Abelian variety if and only if $p$ has
the same parity for all non-zero spaces $H^{p,n-p}(X)$, (\cite[p.123]{Carlson}).

\subsection{Griffiths formulas}

The middle Hodge groups of a smooth cubic sevenfold can be computed
by the following Griffiths formulas, which we shall use below in order
to determine the representation of the action of the symmetry group
$PSL_{2}(\mathbb{F}_{19})$ on the middle cohomology of the Adler
cubic sevenfold.

Let $X=(f(x)=0)$ be a smooth hypersurface of degree $m\ge2$ in the
complex projective space $\mathbb{P}^{n+1}(x)$, $(x)=(x_{1},...,x_{n+2})$.
Let $S=\oplus_{d\ge0}S_{d}$ be the graded polynomial ring $S={\bf C}[x_{0},...,x_{n+2}]$,
with $S_{d}$ being the space of polynomials of degree $d$. Let $I=\oplus I_{d}\subset S$
be the graded ideal generated by the $n+2$ partials $\frac{\partial f}{\partial x_{j}}$,
$j=1,...,n+2$, with $I_{d}=I\cap S_{d}$.\\
Let $R=S/I=\oplus R_{d}$ be the graded Jacobian ring of $X$ (or
the Jacobian ring of the polynomial $f(x)$), with graded components
$R_{d}=S_{d}/I_{d}$. Then for $p+q=n$, the primitive cohomology
group $H_{prim}^{p,q}(X)$ is isomorphic to the graded piece $R_{m(q+1)-n-2}$,
where $R_{d}=0$ for $d<0$, (\cite{Carlson}, p. 169). For odd $n=2k+1$
all the middle cohomology of $X$ are primitive, and in this case\[
H^{p,q}(X)\cong R_{m(q+1)-n-2},\ p+q=2k+1=\mbox{ dim}X,\, m=\deg X.\]
In particular, for a smooth cubic sevenfold $X=(f(x)=0)\subset\mathbb{P}^{8}$
one has\[
H^{7-q,q}(X)=R_{3(q+1)-9},\ 0\le q\le7,\]
 which yields\[
\begin{array}{c}
H^{7,0}(X)=R_{-6}=0,\ H^{6,1}(X)=R_{-3}=0,\\
H^{5,2}(X)=R_{0}\cong\mathbb{C},\ H^{4,3}(X)=R_{3}\cong{\bf C}^{84}.\end{array}\]

\subsection{Character table of $PSL_{2}(\mathbb{F}_{19})$}

In Proposition \ref{decomposition esp. tangent} below, we shall use
the known description of the irreducible representations of the automorphism
group $PSL_{2}(\mathbb{F}_{19})\subset Aut(X)$, which we state here
in brief:

By \cite{Atlas}, the group $PSL_{2}(\mathbb{F}_{19})$ has $12$
conjugacy classes: \[
1,\{w_{1}\},\{w{}_{2}\},\{x\},\{x^{2}\},\{x^{3}\},\{x^{4}\},\{y\},\{y^{2}\},\{y^{3}\},\{y^{4}\},\{y^{5}\}\]
where $x$ has order $9$, $y$ has order $10$, $w_{1}=\left(\begin{array}{cc}
1 & 1\\
0 & 1\end{array}\right)$, $w{}_{2}=w_{1}^{2}$ (we observe that $w_{1}^{2},w_{1}^{3}\in\{w_{2}\},\, w_{2}^{2},w_{2}^{3}\in\{w_{1}\})$.
Correspondingly, the $12$ irreducible representations of $PSL_{2}(\mathbb{F}_{19})$
are \[
T_{1},W_{9},\overline{W}_{9},W_{18}^{1},W_{18}^{2},W_{18}^{3},W_{18}^{4},W_{20}^{1},W_{20}^{2},W_{20}^{3},W_{20}^{4},W_{19}\]
where the notations are uniquely defined by the character table of
$PSL_{2}(\mathbb{F}_{19})$ below for which $a_{k}=2\cos(\frac{2k\pi}{9})$,
$b_{k}=-2\cos(\frac{k\pi}{5})$ and $\nu=\frac{-1+i\sqrt{19}}{2}$.
Note that the representation $W_{9}$ is described otherwise in section
1.

\begin{figure}[h]
\begin{centering}
\begin{tabular}{|c|cccccccccccc|}
\hline 
${}$  & 1  & $w_{1}$  & $w_{2}$  & $x$  & $x^{2}$  & $x^{3}$  & $x^{4}$  & $y$  & $y^{2}$  & $y^{3}$  & $y^{4}$  & $y^{5}$ \tabularnewline
\hline 
1  & 1  & 1  & 1  & 1  & 1  & 1  & 1  & 1  & 1  & 1  & 1  & 1 \tabularnewline
$W_{9}$  & 9  & $\nu$  & $\bar{\nu}$  & 0  & 0  & 0  & 0  & 1  & -1  & 1  & -1  & 1 \tabularnewline
${\overline{W}_{9}}$  & 9  & $\bar{\nu}$  & $\nu$  & 0  & 0  & 0  & 0  & 1  & -1  & 1  & -1  & 1 \tabularnewline
$W_{18}^{1}$  & 18  & -1  & -1  & 0  & 0  & 0  & 0  & $b_{1}$  & $b_{2}$  & $b_{3}$  & $b_{4}$  & 2 \tabularnewline
$W_{18}^{2}$  & 18  & -1  & -1  & 0  & 0  & 0  & 0  & $b_{2}$  & $b_{4}$  & $b_{6}$  & $b_{8}$  & -2 \tabularnewline
$W_{18}^{3}$  & 18  & -1  & -1  & 0  & 0  & 0  & 0  & $b_{3}$  & $b_{6}$  & $b_{9}$  & $b_{12}$  & 2 \tabularnewline
$W_{18}^{4}$  & 18  & -1  & -1  & 0  & 0  & 0  & 0  & $b_{4}$  & $b_{8}$  & $b_{12}$  & $b_{16}$  & -2 \tabularnewline
$W_{20}^{1}$  & 20  & 1  & 1  & $a_{1}$  & $a_{2}$  & $a_{3}$  & $a_{4}$  & 0  & 0  & 0  & 0  & 0 \tabularnewline
$W_{20}^{2}$  & 20  & 1  & 1  & $a_{2}$  & $a_{4}$  & $a_{6}$  & $a_{8}$  & 0  & 0  & 0  & 0  & 0 \tabularnewline
$W_{20}^{3}$  & 20  & 1  & 1  & $a_{3}$  & $a_{6}$  & $a_{9}$  & $a_{12}$  & 0  & 0  & 0  & 0  & 0 \tabularnewline
$W_{20}^{4}$  & 20  & 1  & 1  & $a_{4}$  & $a_{8}$  & $a_{12}$  & $a_{16}$  & 0  & 0  & 0  & 0  & 0 \tabularnewline
$W_{19}$  & 19  & 0  & 0  & 1  & 1  & 1  & 1  & -1  & -1  & -1  & -1  & -1 \tabularnewline
\hline
\end{tabular}
\par\end{centering}

\caption{Character table of $PSL_{2}(\mathbb{F}_{19})$}

\label{fig:wam} 
\end{figure}

\subsection{Periods of the Adler cubic}

Let $X\subset\mathbb{P}^{8}=\mathbb{P}(W_{9})$ be the Adler cubic
sevenfold\[
\begin{array}{cc}
\{f_{19}= & x_{1}^{2}x_{6}+x_{6}^{2}x_{2}+x_{2}^{2}x_{7}+x_{7}^{2}x_{4}+x_{4}^{2}x_{5}+x_{5}^{2}x_{8}+x_{8}^{2}x_{9}+x_{9}^{2}x_{3}+x_{3}^{2}x_{1}\\
 & -2(x_{1}x_{7}x_{8}+x_{2}x_{3}x_{5}+x_{4}x_{6}x_{9})=0\},\end{array}\]
 and let $H^{p,q}=H^{p,q}(X)$ be the Hodge cohomology spaces of $X$. 
\begin{prop}
\label{decomposition esp. tangent} The representation of the group
$PSL_{2}(\mathbb{F}_{19})$ on the 84-dimensional space ${H^{4,3}(X)}^{*}$
of the Adler cubic $X$ is:\[
H^{4,3}(X)^{*}=\overline{W}_{9}\oplus W_{18}^{1}\oplus W_{18}^{3}\oplus W_{19}\oplus W_{20}^{3}.\]
The group $PSL_{2}(\mathbb{F}_{19})$ acts trivially on the one-dimensional
space $H^{5,2}(X)^{*}$.\end{prop}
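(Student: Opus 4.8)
The starting point is the Griffiths description recalled above: $H^{5,2}(X)\cong R_0$ and $H^{4,3}(X)\cong R_3$, where $R=S/I$ is the Jacobian ring of $f_{19}$, $S=\mathbb{C}[x_1,\dots,x_9]$ and $I$ is generated by the nine quadrics $q_j=\partial f_{19}/\partial x_j$. These isomorphisms are $PSL_2(\mathbb{F}_{19})$-equivariant once $S$ is given the natural action by substitution of variables; in the notation of Section 1 the $x_j$ form the dual basis of the $e_j$, so $S_1=\overline W_9$ (one checks $\operatorname{tr}(\tau|_{S_1})=\sum_j\xi^{-j^2}=\bar\nu$). Since $R_0=S_0=\mathbb{C}$ carries the trivial action, the statement about $H^{5,2}(X)^*$ is immediate, and it only remains to identify the $84$-dimensional module $R_3^*\cong H^{4,3}(X)^*$.

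For that I would first determine $I$ as a $G$-module. Adler's cubic is $G$-invariant in the strict sense (a perfect group has no nontrivial characters, so "invariant up to scalar'' equals invariant), so the differential $df_{19}$, viewed as an element of $\mathrm{Hom}(W_9,S_2)$ sending a direction $v\in W_9$ to the quadratic form $D_vf_{19}$, is $G$-equivariant; as $X$ is smooth the $q_j$ have no common zero, hence are linearly independent, so this map is injective and its image $I_2=\mathrm{span}(q_j)$ is $\cong W_9$. Next, $I_3=S_1\cdot I_2$ is the image of the equivariant multiplication map $S_1\otimes I_2\to S_3$, and its dimension is $\dim S_3-\dim R_3=\binom{11}{3}-84=165-84=81=9\cdot 9=\dim(S_1\otimes I_2)$, so that map is injective and $I_3\cong S_1\otimes I_2\cong\overline W_9\otimes W_9$. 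Passing to virtual characters, $[R_3]=[S^3\overline W_9]-[\overline W_9\otimes W_9]$, and dualizing (for representations of a finite group dual $=$ complex conjugate, and $W_9\otimes\overline W_9$ is self-dual, with $W_9^*=\overline W_9$) gives
\[
[H^{4,3}(X)^*]=[R_3^*]=[S^3 W_9]-[W_9\otimes\overline W_9].
\]

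It then remains to decompose the right-hand side into irreducibles using the character table of Figure \ref{fig:wam}. I would use $\chi_{W_9\otimes\overline W_9}(g)=|\chi_{W_9}(g)|^2$ and the identity $\chi_{S^3 W_9}(g)=\tfrac16\bigl(\chi_{W_9}(g)^3+3\,\chi_{W_9}(g)\,\chi_{W_9}(g^2)+2\,\chi_{W_9}(g^3)\bigr)$, together with the power maps on conjugacy classes read off from the element orders ($w_1^2,w_1^3\in\{w_2\}$, and the powers of $x$ of order $9$ and of $y$ of order $10$ lying in the evident classes), and the relations $\nu^2=-\nu-5$, $\nu\bar\nu=5$, to evaluate $\chi_{R_3^*}$ on all twelve classes; the constituents are then recovered by taking inner products with the twelve irreducible characters. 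One finds $\chi_{R_3^*}=\chi_{\overline W_9}+\chi_{W_{18}^1}+\chi_{W_{18}^3}+\chi_{W_{19}}+\chi_{W_{20}^3}$ (for instance at $w_1$ both sides equal $\bar\nu-1$, and $9+18+18+19+20=84$), which is the asserted decomposition. The only genuine work is this last bookkeeping step — correctly tracking the power maps and the values $a_k=2\cos(2k\pi/9)$, $b_k=-2\cos(k\pi/5)$ that separate the four $18$-dimensional and the four $20$-dimensional irreducibles; the representation-theoretic input $I_2\cong W_9$ and $I_3\cong W_9\otimes\overline W_9$ is the conceptual heart and is forced by equivariance together with the Griffiths dimension count.
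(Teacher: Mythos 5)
Your argument is essentially the paper's own proof: identify $H^{4,3}\cong R_3$ via Griffiths, decompose $\mathrm{Sym}^3$ of the $9$-dimensional representation with the symmetric-cube character formula, show $I_2$ is a $9$-dimensional irreducible so that $I_3\cong W_9\otimes\overline{W}_9$, and subtract. The only divergences are cosmetic or slight improvements: you label $I_2\cong W_9$ where the paper (using the right-action convention on the dual space, under which $V\cong V^*$) gets $\overline{W}_9$ -- harmless since only the self-conjugate product $W_9\otimes\overline{W}_9$ enters -- and you justify the injectivity of $S_1\otimes I_2\to I_3$ by the dimension count $165-84=81$, a step the paper leaves implicit.
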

\begin{rem}
\label{rem:dual}Let a group $G$ acts on a vector space $V$ on the
left. Then the group $G$ acts on the right on the dual space $V^{*}$:
for $\ell\in V^{*}$ and $g\in G$, $\ell\cdot g=\ell\circ g$. Since
the traces of the action of $g$ on $V$ and on $V^{*}$ are equal,
the two representations $V$ and $V^{*}$ are isomorphic. The representation
$V^{*}$ should not be confused with the dual representation of $G$
defined such that the pairing between $V$ and $V^{*}$ is $G$-invariant
(see \cite{Fulton}).\end{rem}
\begin{proof}
In order to decompose the representation of $PSL_{2}(\mathbb{F}_{19})\subset Aut(X)$
on the dual cohomology space ${H^{4,3}}^{*}$, we shall use the identification
as representation space\[
H^{4,3}(X)\cong R_{3}=S_{3}/I_{3}=\mbox{Sym}^{3}W_{9}/I_{3}\]
between $H^{4,3}(X)$ and the graded component of degree $3$ in the
quotient polynomial ring $S={\bf C}[x_{1},...,x_{9}]$ by the Jacobian
ideal $I$ spanned by the 9 partials of $f_{19}$, see above.

The space $\mathbb{P}^{8}$ containing the Adler cubic $X$ is the
projectivization of the representation space $W_{9}=(S_{1})^{*}$.
We have $S_{3}=\mbox{Sym}^{3}(W_{9}^{*})\cong\mbox{Sym}^{3}(W_{9})$.
Let us decompose $\mbox{Sym}^{3}W_{9}$ into irreducible representations
of $PSL_{2}(\mathbb{F}_{19})$. The character of the third symmetric
power of a representation $V$ is:\[
\chi_{\text{Sym}^{3}V}(g)=\frac{1}{6}(\chi_{V}(g)^{3}+3\chi_{V}(g^{2})\chi_{V}(g)+2\chi_{V}(g^{3})),\]
see \cite{Pulay}. Therefore the traces of the action of the elements
$1,w_{1},w_{2}$ etc on $\mbox{Sym}^{3}W_{9}$ are\[
v=\,^{t}(165,3-\nu,3-\bar{\nu},0,0,3,0,0,0,0,0,5).\]
 Using the character table of $PSL_{2}(\mathbb{F}_{19})$, we obtain:\[
\mbox{Sym}^{3}W_{9}=T_{1}\oplus\overline{W}_{9}\oplus W_{18}^{1}\oplus W_{18}^{3}\oplus W_{20}^{1}\oplus W_{20}^{2}\oplus(W_{20}^{3})^{\oplus2}\oplus W_{20}^{4}\oplus W_{19}.\]

The graded component ${I}_{2}$ of the Jacobian ideal $I$ of $X$
is generated by the $9$ derivatives $\frac{df_{19}}{dx_{k}},\, k=1,\dots,9$.
The space $I_{2}$ is a representation of $PSL_{2}(\mathbb{F}_{19})$.
The action of $w_{1}$ on $x_{j}$ is the multiplication by $\xi^{j^{2}}$,
where $\xi=e^{2i\pi/19}$, see section $1$. The action of $w_{1}$
on $I_{2}$ is then easy to compute. By example, since \[
\frac{df_{19}}{dx_{1}}=2x_{1}x_{6}+x_{3}^{2}-2x_{7}x_{8},\]
we get $w_{1}\cdot\frac{df_{19}}{dx_{1}}=\xi^{-1}\frac{df_{19}}{dx_{1}}$.
By looking at the character table, we obtain that the representation
${I}_{2}$ is $\overline{W}_{9}$. Therefore ${I}_{3}=W_{9}\otimes\overline{W}_{9}$.
Using the fact that for two representations $V_{1},V_{2}$, their
characters satisfy the relation $\chi_{V_{1}\otimes V_{2}}=\chi_{V_{1}}\chi_{V_{2}}$,
we obtain: \[
I_{3}=W_{9}\otimes\overline{W}_{9}=T_{1}\oplus W_{20}^{1}\oplus W_{20}^{2}\oplus W_{20}^{3}\oplus W_{20}^{4}.\]
Since\[
{H^{4,3}(X)}^{*}\cong(S_{3}/I_{3})^{*}\cong\mbox{Sym}^{3}W_{9}/I_{3}\]
then\[
H^{4,3}(X)^{*}=\overline{W}_{9}\oplus W_{18}^{1}\oplus W_{18}^{3}\oplus W_{19}\oplus W_{20}^{3}.\]

The action of the group $PSL_{2}(\mathbb{F}_{19})\subset Aut(X)$
on $H^{5,2}(X)^{*}\cong R_{0}^{*}\cong{\bf C}$ is trivial, see the
character table. 
\end{proof}
Let $V$ be a representation of a finite group $G$ and let $W$ be
an irreducible representation of $G$ of character $\chi_{W}$. We
know that there exist a uniquely determined integer $a\geq0$ and
a representation $V'$ of $G$ such that $W$ is not a sub-representation
of $V'$ and $V$ is (isomorphic to) $W^{\oplus a}\oplus V'$. By
\cite{Fulton} formula (2.31) p. 23, the linear endomorphism\[
\psi_{W}=\frac{\dim W}{|G|}\sum_{g\in G}\overline{\chi_{W}(g)}g\,:\, V\to V\]
 is the projection of $V$ onto the factor consisting of the sum of
all copies of $W$ appearing in $V$ i.e. is the projection onto $W^{\oplus a}$.
For the trivial representation $T$,\ $\chi_{T}=1$ and $\psi_{T}$
is the projection of $V$ onto the invariant space $V^{G}$.

Recall that an endomorphism $h$ of a torus $Y$ acts on the tangent
space $TY$ of $Y$ by a linear endomorphism $dh$ called the analytic
representation of $h$. The kernel of the map $End(Y)\to End(TY)$,
$h\to dh$ is the group of translations (see \cite{Birkenhake}) ;
in the following we will work with endomorphisms up to translation. 

Suppose that for some irreducible representations $W_{1},\dots,W_{k}$
the sum\[
\overline{\chi_{W_{1}}(g)}+\dots+\overline{\chi_{W_{k}}(g)}\]
 is an integer for every $g$. Then the endomorphism \[
h=\sum_{g\in G}(\overline{\chi_{W_{1}}(g)}+\dots+\overline{\chi_{W_{k}}(g)})g\in End(J_{G}X)\]
 is well defined and its analytic representation is\[
dh=\sum_{g\in G}(\overline{\chi_{W_{1}}(g)}+\dots+\overline{\chi_{W_{k}}(g)})dg\in End(TJ_{G}X).\]
 The tangent space of the image of $h$ (translated to $0$) is the
image of $dh$.
\begin{cor}
\label{JF de Adler} The torus $J_{G}X$ has the structure of an Abelian
variety and is isogenous to:\[
E\times A_{9}\times A_{36}\times A_{19}\times A_{20}\]
 where $A_{k}$ is a $k$-dimensional Abelian subvariety of $J_{G}X$
and $E\subset J_{G}X$ an elliptic curve. The group $PSL_{2}(\mathbb{F}_{19})$
acts nontrivially on each factors $A_{9},A_{36},A_{19},A_{20}$. \end{cor}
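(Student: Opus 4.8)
The plan is to combine the Hodge-theoretic input of Proposition~\ref{decomposition esp. tangent} with the criterion for $J_G(X)$ to be an abelian variety stated in Section~2.1, and then to split off the isotypic pieces using the projectors $\psi_W$ recalled just above. First I would invoke the parity criterion: by the Griffiths formulas, the only nonzero middle Hodge spaces of the cubic sevenfold $X$ are $H^{5,2}(X)$ and $H^{4,3}(X)$, so the relevant exponents $p\in\{5,4\}$ do not all have the same parity, and a priori the Griffiths form $Q_G$ only gives $J_G(X)$ the structure of a polarized complex torus. However $Q_G$ is definite on each $H^{p,n-p}$ with opposite signs on pieces with $p-p'$ odd; since $\dim H^{5,2}=1$, one can replace $Q_G$ by the form that is $Q_G$ on the $84$-dimensional part $(H^{4,3})^*$ and $-Q_G$ (or an integral rescaling) on the one-dimensional part $(H^{5,2})^*$. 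This modified form is now positive definite, hence a polarization, and the Riemann relations (the integrality of $\Im m(Q_G)$ on $H_7(X,\mathbb Z)$ plus the single sign flip on a rank-two sublattice) still guarantee it takes values in a lattice commensurable with $H_7(X,\mathbb Z)$; therefore $J_G(X)$ is an abelian variety.

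Next I would produce the isogeny decomposition. The analytic representation of $PSL_2(\mathbb F_{19})$ on $T_0 J_G(X)\cong (H^{5,2}\oplus H^{4,3})^*$ decomposes, by Proposition~\ref{decomposition esp. tangent}, as
\[
T_1\;\oplus\;\overline{W}_9\;\oplus\;W_{18}^1\;\oplus\;W_{18}^3\;\oplus\;W_{19}\;\oplus\;W_{20}^3 .
\]
I would then apply Poincaré complete reducibility together with the projector construction explained in the excerpt: for each irreducible summand $W$ whose character is \emph{not} rational, one groups it with its Galois conjugates so that $\overline{\chi_W(g)}+\overline{\chi_{W'}(g)}+\cdots$ becomes integer-valued for all $g$, and then $h=\sum_g(\cdots)g$ is an honest endomorphism of $J_G(X)$ whose image is an abelian subvariety with tangent space exactly the corresponding isotypic block. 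Concretely: $W_{19}$ and $W_{20}^3$ already have integer characters, so they give abelian subvarieties $A_{19}$ and $A_{20}$ of dimensions $19$ and $20$; the two $18$-dimensional representations $W_{18}^1,W_{18}^3$ have characters involving $b_k=-2\cos(k\pi/5)$ which are conjugate over $\mathbb Q(\sqrt5)$, and their appropriate combination is integer-valued, yielding a single $36$-dimensional $A_{36}$; the trivial summand $T_1$ gives the elliptic curve $E$ (one-dimensional, acted on trivially by the group); and $\overline{W}_9$, whose character involves $\nu=\tfrac{-1+i\sqrt{19}}{2}$, must be paired with its complex conjugate $W_9$ — but $W_9$ does \emph{not} occur in the list above, so instead one uses that $\overline{\chi_{W_9}(g)}+\overline{\chi_{\overline W_9}(g)}=\chi_{W_9}(g)+\overline{\chi_{W_9}(g)}\in\mathbb Z$ is still computable from a projector landing on the $\overline W_9$-part plus possibly a $W_9$-part which is empty here, so the $9$-dimensional block is cut out and gives $A_9$; alternatively one observes directly that $\overline W_9$ is defined over $\mathbb Q(\sqrt{-19})=\mathbb Q(\nu)$ and the endomorphism ring of $J_G(X)$ contains $\mathbb Z[\nu]$, so the $\overline W_9$-isotypic part descends to an abelian ninefold $A_9$. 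Finally $E,A_9,A_{36},A_{19},A_{20}$ are the images of orthogonal idempotents (up to isogeny) in $\mathrm{End}(J_G(X))\otimes\mathbb Q$, hence
\[
J_G(X)\ \sim\ E\times A_9\times A_{36}\times A_{19}\times A_{20},
\]
and the action is nontrivial on each $A_9,A_{36},A_{19},A_{20}$ because each of $\overline W_9,W_{18}^1,W_{18}^3,W_{19},W_{20}^3$ is a nontrivial irreducible representation.

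The main obstacle is the first step: arguing carefully that flipping the sign of $Q_G$ on the one-dimensional $(H^{5,2})^*$-summand produces a form that is not merely positive definite on $T_0 J_G(X)$ but actually defines a \emph{polarization} in the sense of Riemann — i.e.\ that its imaginary part is an integral (or at least rationally valued) alternating form on the period lattice $H_7(X,\mathbb Z)$, with the correct nondegeneracy. One has to check that the modification is induced by an integral change on the lattice (equivalently, that $H^{5,2}$ and its conjugate span a rational sub-Hodge-structure, which follows from $PSL_2(\mathbb F_{19})$-invariance together with the character table: the trivial summand of $H^7(X,\mathbb C)$ is defined over $\mathbb Q$). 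Modulo this point, which is exactly the content of the abelian-variety criterion in \cite[p.~123]{Carlson} applied after separating the $p$-even and $p$-odd parts, the remainder is the bookkeeping with characters and projectors sketched above. I would also remark that once $J_G(X)$ is known to be abelian, the factor $E$ here is in fact the CM elliptic curve $\mathbb C/\mathbb Z[\nu]$ appearing in Theorem~\ref{thm:The-Abelian-variety}, which pins down the isogeny factor of dimension one.
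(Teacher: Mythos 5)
Your proposal is correct and follows essentially the same route as the paper: it establishes that $J_G X$ is abelian by flipping the sign of $Q_G$ on one of the two orthogonal Hodge pieces and checking integrality on the finite-index sublattice generated by $H_7(X,\mathbb Z)\cap (H^{5,2})^*$ and $H_7(X,\mathbb Z)\cap (H^{4,3})^*$ (whose rationality comes, as in the paper, from the integrality of the isotypic projectors), and it obtains the factors $E,A_9,A_{36},A_{19},A_{20}$ as images of endomorphisms $\sum_g \chi(g)g$ for integer-valued sums of characters. The only differences are cosmetic — you group $W_{18}^1$ with $W_{18}^3$ and take $W_{20}^3$ alone, where the paper sums over all four $W_{18}^j$ and all four $W_{20}^j$; the resulting projections coincide since the extra summands do not occur in $H^{4,3}(X)^*$.
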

\begin{proof}
The character of the trivial representation is $\chi_{0}=1$. Let
$\chi_{1},\chi_{2},\chi_{3},\chi_{4}$ be respectively the characters
of the representations $W_{9}\oplus\overline{W}_{9}$, $W_{18}^{1}\oplus W_{18}^{2}\oplus W_{18}^{3}\oplus W_{18}^{4}$,
$W_{19}$ and $W_{20}^{1}\oplus W_{20}^{2}\oplus W_{20}^{3}\oplus W_{20}^{4}$.
By the character table, the numbers $\chi_{k}(g),\, g\in PSL_{2}(\mathbb{F}_{19})$
are integers, therefore we can define\[
q_{k}=\sum_{g\in PSL_{2}(\mathbb{F}_{19})}\chi_{k}(g)g\]
for $k=0,\dots,4$. The analytic representation of $q_{k}$ is a multiple
of the projection onto the subspace $(H^{5,2})^{*},W_{9},...$ of
$TJ_{G}X$. The images of $q_{k},\, k=0,\cdots,4$ are therefore respectively
$1,9,36,19,20$-dimensional sub-tori of $J_{G}X$, stable by the action
of the group ring $\mathbb{Z}[PSL_{2}(\mathbb{F}_{19})]$ and denoted
respectively by $E,A_{9},\dots,A_{20}$. 

The image of the endomorphism $q_{1}+\dots+q_{4}$ (resp. $q_{0}$)
is a sub-torus whose tangent space is $ $$H^{4,3}(X)^{*}$ (resp.
$H^{5,2}(X)^{*}$), therefore $H_{7}(X,\mathbb{Z})\cap H^{4,3}(X)^{*}$
is a lattice in $H^{4,3}(X)^{*}$ (resp. $H_{7}(X,\mathbb{Z})\cap H^{5,2}(X)^{*}$
is a lattice in $H^{5,2}(X)^{*}$). 

Let $Q_{G}$ be the Hodge-Riemann form on the tangent space $(H^{5,2}\oplus H^{4,3})^{*}$.
It is positive definite on $H^{5,2}(X)^{*}$, negative definite on
$H^{4,3}(X)^{*}$, the space $H^{4,3}(X)^{*}$ is orthogonal to $H^{5,2}(X)^{*}$
with respect to $Q_{G}$ and $Q_{G}$ takes integral values on $H_{7}(X,\mathbb{Z})\subset(H^{5,2}\oplus H^{4,3})^{*}$
(see \cite{Carlson}, 114).

Let us define the quadratic form $Q'$ on $(H^{5,2}\oplus H^{4,3})^{*}$
by $Q'=-Q_{G}$ on $H^{4,3}(X)^{*}$ and $Q'=Q_{G}$ on $H^{5,2}(X)^{*}$.
This $Q'$ is a definite quadratic form that takes integral values
on the lattice $\Lambda\subset H^{5,2}(X)^{*}$ generated by $H_{7}(X,\mathbb{Z})\cap H^{4,3}(X)^{*}$
and $H_{7}(X,\mathbb{Z})\cap H^{5,2}(X)^{*}$. We thus see that $J'=(H^{5,2}\oplus H^{4,3})^{*}/\Lambda$
is an Abelian variety, and since $J_{G}X$ is isogenous to $J'$,
$J_{G}X$ is also an Abelian variety. 
\end{proof}

\section{On the Adler-Klein pencil of cubics.}

Here we study the Adler-Klein pencil of cubics $X_{\lambda}=\{f_{\lambda}=0\}$,\[
\begin{array}{cc}
f_{\lambda}= & x_{1}^{2}x_{6}+x_{6}^{2}x_{2}+x_{2}^{2}x_{7}+x_{7}^{2}x_{4}+x_{4}^{2}x_{5}+x_{5}^{2}x_{8}+x_{8}^{2}x_{9}+x_{9}^{2}x_{3}+x_{3}^{2}x_{1}\\
 & +\lambda(x_{1}x_{7}x_{8}+x_{2}x_{3}x_{5}+x_{4}x_{6}x_{9}),\end{array}\]
where $X_{-2}$ is the Adler cubic, $X_{0}$ is the Klein cubic. Since
$X_{-2}$ and $X_{0}$ are smooth, the general member of the pencil
is smooth.

The automorphism group of $X_{\lambda}$ contains the group \[
H=\mathbb{Z}/9\mathbb{Z}\rtimes\mathbb{Z}/19\mathbb{Z}\]
 whose law is defined multiplicatively by:\[
(a,b)(c,d)=(a+c,4^{c}\cdot b+d).\]

\begin{rem*}
The group $H$ is a subgroup of $PSL_{2}(\mathbb{F}_{19})$ : it is
the stabilizer of a point in the projective line $\mathbb{P}^{1}(\mathbb{F}_{19})$
for the action of the simple group $PSL_{2}(\mathbb{F}_{19})$. Since
$\mathbb{P}^{1}(\mathbb{F}_{19})$ has $20$ points, there are therefore
$20$ such subgroups in $PSL_{2}(\mathbb{F}_{19})$. 
\end{rem*}
Let us study the representation of $H$ on the tangent space of the
intermediate Jacobian of $X_{\lambda}$. Let be $a=(1,0)$ and $b=(0,1)\in H$.
Let us denote by $C_{g}$ the conjugacy class of an element $g\in H$.
The conjugacy classes of the group $H$ are the $11$ classes: $C_{b}$,
$C_{b^{2}}$ and $C_{a^{k}}$, $k=0,\cdots,8$. Let $\mu$ be a $9^{th}$-primitive
root of unity. The group $\mathbb{Z}/9\mathbb{Z}$ is the quotient
of $H$ by $\mathbb{Z}/19\mathbb{Z}$, thus the irreducible one-dimensional
representation \[
\chi_{k}:\left|\begin{array}{ccc}
\mathbb{Z}/9\mathbb{Z} & \rightarrow & \mathbb{C}^{*}\\
a & \rightarrow & \mu^{ka}\end{array}\right.,\, k\in\{0,\dots,8\}\]
 induces an irreducible representation $V_{k}$ of $H$. Let $V_{9},\,\overline{V}_{9}$
be the restrictions of the representations $W_{9},\,\overline{W}_{9}$
to $H\subset PSL_{2}(\mathbb{F}_{19})$. As $171=1^{2}+\dots+1^{2}+9^{2}+9^{2}$
is equal to the order of $H$, the representations $V_{0},\dots,V_{8},V_{9},\overline{V}_{9}$
are the $11$ irreducible non-isomorphic representations of $H$ (see
\cite{Fulton}). 
\begin{prop}
\label{Representation par restriction}The restrictions of the representations
$W_{18}^{3},\, W_{19},\, W_{20}$ of $PSL_{2}(\mathbb{F}_{19})$ to
$H\subset PSL_{2}(\mathbb{F}_{19})$ are decomposed as follows:\[
\begin{array}{c}
W_{18}^{3}=V_{9}+\overline{V}_{9}\\
W_{19}=V_{0}+V_{9}+\overline{V}_{9}\\
W_{20}=V_{3}+V_{6}+V_{9}+\overline{V}_{9}.\end{array}\]

\end{prop}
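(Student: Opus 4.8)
The plan is to compute the character of each restricted representation on the conjugacy classes of $H$ and then decompose using the inner product against the known characters $\chi_{V_0},\dots,\chi_{V_8},\chi_{V_9},\chi_{\overline V_9}$. The first step is to set up the character table of $H=\mathbb{Z}/9\mathbb{Z}\rtimes\mathbb{Z}/19\mathbb{Z}$ explicitly: the $9$ one-dimensional characters $\chi_{V_k}$ take value $\mu^{ka}$ on $C_{a^a}$ and value $1$ on $C_b,C_{b^2}$; the characters of $V_9,\overline V_9$ are obtained by restriction from $PSL_2(\mathbb{F}_{19})$, so I need to know which elements of $H$ fall into which $PSL_2(\mathbb{F}_{19})$-conjugacy classes. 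Since $a$ has order $9$, $a^j$ lands in one of the classes $\{x\},\{x^2\},\{x^3\},\{x^4\}$ of Figure~1 (with $x^j$ and $x^{-j}$ conjugate), and $b,b^2$ (order $19$) land in $\{w_1\}$ or $\{w_2\}$; from the ATLAS convention one reads off that on $a^j$ the value of $W_9$ is $0$ and on $b,b^2$ it is $\nu$ or $\bar\nu$. Thus $\chi_{V_9}+\chi_{\overline V_9}$ equals $18$ at $1$, $-1$ at $C_b,C_{b^2}$ (since $\nu+\bar\nu=-1$), and $0$ at all $C_{a^j}$, $j\ne0$.

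Next I would restrict each of the three representations in the statement. For $W_{18}^3$: from Figure~1 its character is $18$ at $1$, $-1$ at $w_1,w_2$, and $0$ at $x,x^2,x^3,x^4$; restricting to $H$ this is exactly $\chi_{V_9}+\chi_{\overline V_9}$, giving $W_{18}^3|_H=V_9+\overline V_9$ immediately. For $W_{19}$: its character is $19$ at $1$, $0$ at $w_1,w_2$, and $1$ at $x,x^2,x^3,x^4$; I would compute $\langle W_{19}|_H,\chi_{V_0}\rangle$ and check it equals $1$, and then verify that $W_{19}|_H-V_0$ has the character $18,-1,-1,0,0,\dots$ which is $\chi_{V_9}+\chi_{\overline V_9}$, yielding $W_{19}|_H=V_0+V_9+\overline V_9$. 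For $W_{20}$ (one of the $20$-dimensional ones, the context being $W_{20}^3$): its character is $20$ at $1$, $1$ at $w_1,w_2$, $a_k$ at the $x^k$, and $0$ at the $y$-classes. I would compute the inner products $\langle W_{20}|_H,\chi_{V_k}\rangle$ for $k=0,\dots,8$; the key arithmetic fact is that the sum $\sum_{j=1}^{8}a_{j}\mu^{-kj}$ (with $a_j=2\cos(2\pi j/9)$, matching $x^j\sim x^{-j}$) picks out exactly $k\in\{3,6\}$, because $2\cos(2\pi j/9)=\zeta_9^j+\zeta_9^{-j}$ and the resulting Gauss-type sum over ninth roots of unity is supported on the multiples of $3$. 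After subtracting $V_3+V_6$, the remaining character is again $18,-1,-1,0,\dots$, i.e. $V_9+\overline V_9$, so $W_{20}|_H=V_3+V_6+V_9+\overline V_9$.

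The main obstacle is bookkeeping rather than conceptual: I must pin down the precise correspondence between the abstract generators $a,b$ of $H$ and the ATLAS conjugacy-class labels $x,y,w_1,w_2$, so that the $a_k$ and the sign choices for $\nu$ versus $\bar\nu$ are assigned to the right powers of $a$ and $b$; an off-by-sign or a permutation of the $a_k$ among the classes $C_{a},C_{a^2},C_{a^3},C_{a^4}$ would corrupt the inner-product computation. Once that identification is fixed (using that $a$ has order $9$, that $a^j$ and $a^{-j}$ are $H$-conjugate, and the explicit matrix $w_1=\bigl(\begin{smallmatrix}1&1\\0&1\end{smallmatrix}\bigr)$ realizing $b$), the rest is a short finite computation, and the three displayed decompositions follow by comparing characters. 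A clean way to organize the write-up is to first record the character of $V_9+\overline V_9$ on $H$ once and for all, then observe that each of $W_{18}^3|_H$, $(W_{19}-V_0)|_H$, $(W_{20}-V_3-V_6)|_H$ has that same character.
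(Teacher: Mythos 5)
Your approach is the same as the paper's: the paper writes the restriction as $V_0^{\oplus a_0}\oplus\cdots\oplus V_9^{\oplus a_9}\oplus\overline V_9^{\oplus b_9}$ and solves the resulting linear system on character values, which is exactly your inner-product computation in different clothing; your observation that each of $W_{18}^3|_H$, $(W_{19}-V_0)|_H$ and $(W_{20}-V_3-V_6)|_H$ has the character $(18,0,\dots,0,-1,-1)$ of $V_9+\overline V_9$ is a clean way to organize the same verification. (For the inner products you will also need the class sizes in $H$: each $C_{a^j}$, $j\neq0$, has $19$ elements and $C_b$, $C_{b^2}$ have $9$ each.)

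One concrete slip in the $W_{20}$ step: the ``key arithmetic fact'' as you state it is false. With $a_j=2\cos(2\pi j/9)=\mu^j+\mu^{-j}$ one has $\sum_{j=1}^{8}a_j\mu^{-kj}=\sum_j\mu^{(1-k)j}+\sum_j\mu^{-(1+k)j}$, which is supported at $k\in\{1,8\}$, not $\{3,6\}$; that computation decomposes $W_{20}^1|_H$ as $V_1+V_8+V_9+\overline V_9$. The decomposition claimed in the Proposition (and used in Corollary \ref{Repre de G sur JF(lambda)}) is for $W_{20}^3$, whose character at $x^k$ is $a_{3k}$, not $a_k$; the relevant sum is then $\sum_{j=1}^{8}a_{3j}\mu^{-kj}=\sum_j\mu^{(3-k)j}+\sum_j\mu^{-(3+k)j}$, supported exactly at $k\in\{3,6\}$. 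This is precisely the bookkeeping hazard you flagged, so the fix is local, but as written the computation proves the wrong statement; note also that the four representations $W_{20}^i$ restrict differently ($W_{20}^i|_H=V_i+V_{9-i}+V_9+\overline V_9$), so the superscript genuinely matters and the unadorned ``$W_{20}$'' in the statement must be read as $W_{20}^3$.
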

\begin{figure}[h]

\begin{centering}
\begin{tabular}{|c|ccccccccccc|}
\hline 
${}$  & 1  & $a$  & $a^{2}$  & $a^{3}$  & $a^{4}$  & $a^{5}$  & $a^{6}$  & $a^{7}$  & $a^{8}$  & $b$  & $b^{2}$ \tabularnewline
\hline 
$V_{0}$  & 1  & 1  & 1  & 1  & 1  & 1  & 1  & 1  & 1  & 1  & 1 \tabularnewline
$V_{1}$  & 1  & $\mu$  & $\mu^{2}$  & $\cdots$  & $\cdots$  & $\cdots$  & $\cdots$  & $\cdots$  & $\mu^{8}$  & 1  & 1 \tabularnewline
$V_{2}$  & 1  & $\mu^{2}$  & $\mu^{4}$  & ...  &  &  &  &  & $\vdots$  & 1  & 1 \tabularnewline
$V_{3}$  & 1  & $\vdots$  &  & $\ddots$  &  &  &  &  &  & 1  & 1 \tabularnewline
$V_{4}$  & 1  & $\vdots$  &  &  &  &  &  &  &  & 1  & 1 \tabularnewline
$V_{5}$  & 1  & $\vdots$  &  &  &  &  &  &  &  & 1  & 1 \tabularnewline
$V_{6}$  & 1  & $\vdots$  &  &  &  &  &  &  &  & 1  & 1 \tabularnewline
$V_{7}$  & 1  & $\vdots$  & $\vdots$  &  &  &  &  & $\ddots$  & $\vdots$  & 1  & 1 \tabularnewline
$V_{8}$  & 1  & $\mu^{8}$  & $\mu^{16}$  & $\cdots$  & $\cdots$  & $\cdots$  & $\cdots$  & $\cdots$  & $\mu^{64}$  & 1  & 1 \tabularnewline
$V_{9}$  & 9  & 0  & 0  & 0  & 0  & 0  & 0  & 0  & 0  & $\nu$  & $\bar{\nu}$ \tabularnewline
$\overline{V}_{9}$  & 9  & 0  & 0  & 0  & 0  & 0  & 0  & 0  & 0  & $\bar{\nu}$  & $\nu$ \tabularnewline
\hline
\end{tabular}
\par\end{centering}

\caption{Character table of $H=\mathbb{Z}/9\mathbb{Z}\rtimes\mathbb{Z}/19\mathbb{Z}$.
$\nu=\frac{1}{2}(-1+i\sqrt{19})$, $\mu^{9}=1$, $a=(1,0),\, b=(0,1)$. }

\label{fig:wam} 
\end{figure}

\begin{proof}
There exist positive integers $a_{0},\dots,a_{9},b_{0}$ such that
the representation of $H$ on $W_{18}^{3}$ equals (in fact, is isomorphic)
to $V_{0}^{\oplus a_{0}}\oplus\cdots\oplus V_{9}^{\oplus a_{9}}\oplus\overline{V}_{9}^{\oplus b_{9}}$.
For an element $g\in H$ acting on a vector space $V$, we denote
by $Tr(g|V)$ the trace of $g$ acting on $V$. We have\[
Tr(g|W_{18}^{3})=a_{0}Tr(g|V_{0})+\cdots+a_{9}Tr(g|V_{9})+b_{9}Tr(g|\overline{V}_{9}).\]
The values of $Tr(g|V_{1}),\cdots,Tr(g|V_{9}),\, Tr(g|\overline{V}_{9})$
are given in the above character table, and are independent of the
conjugacy class of $g$. Since we know the values of $Tr(g|W_{18}^{3})$,
$g\in H$ from the character table of $PSL_{2}(\mathbb{F}_{19})$,
by elementary linear algebra we obtain that $a_{0}=\cdots=a_{8}=0$
and $a_{9}=b_{9}=1$, therefore $W_{18}^{3}=V_{9}\oplus\overline{V}_{9}.$
In the same way, we find $W_{19}=V_{0}+V_{9}+\overline{V}_{9}$ and
$W_{20}=V_{3}+V_{6}+V_{9}+\overline{V}_{9}.$
\end{proof}
Let $X_{\lambda}$ be a smooth cubic in the Klein-Adler pencil and
let $J_{G}X_{\lambda}$ be the Griffiths intermediate Jacobian of
$X_{\lambda}$. 
\begin{cor}
\label{Repre de G sur JF(lambda)}The representation of $H$ on the
tangent space to the Griffiths intermediate Jacobian of $X_{\lambda}$
is:\[
TJ_{G}X_{\lambda}=V_{0}+H^{4,3}(X)^{*}=2V_{0}+V_{3}+V_{6}+4V_{9}+5\overline{V}_{9}.\]
 There exist subtori $A_{2}$, $B_{2}$, $B_{81}$ of $J_{G}X_{\lambda}$
of dimension respectively $2$, $2$ , $81$, such that $ $$B_{2}$
and $B_{81}$ are Abelian varieties and such that there is an isogeny
of complex tori\[
J_{G}X_{\lambda}\rightarrow A_{2}\times B_{2}\times B_{81}.\]
\end{cor}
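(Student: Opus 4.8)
The strategy is to mimic the proof of Corollary \ref{JF de Adler}, using the decomposition of the tangent space $TJ_{G}X_{\lambda}$ into isotypical pieces for the group $H$ rather than for $PSL_{2}(\mathbb{F}_{19})$. First, combining Proposition \ref{decomposition esp. tangent} with Proposition \ref{Representation par restriction}, the restriction of $H^{4,3}(X)^{*}=\overline{W}_{9}\oplus W_{18}^{1}\oplus W_{18}^{3}\oplus W_{19}\oplus W_{20}^{3}$ to $H$ decomposes. Here one must be slightly careful: $W_{18}^{1}$ is not among the representations treated in Proposition \ref{Representation par restriction}, but its restriction to $H$ can be computed by exactly the same linear-algebra argument from the character tables (Figures 1 and 2); since $\dim W_{18}^{1}=18$ and the only irreducible $H$-representations of dimension $>1$ are $V_{9},\overline{V}_{9}$ of dimension $9$, and $\chi_{W_{18}^1}(w_1)=\chi_{W_{18}^1}(w_2)=-1$ forces $W_{18}^{1}|_H=V_9\oplus\overline{V}_9$ just as for $W_{18}^{3}$. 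Adding the trivial summand $H^{5,2}(X)^{*}=V_0$ coming from $TJ_GX_\lambda = V_0 \oplus H^{4,3}(X)^*$ (Griffiths' formulas give $H^{5,2}=R_0\cong\mathbb{C}$ with trivial action for every smooth $X_\lambda$, since $I_2$ does not meet $S_0$), one obtains $TJ_{G}X_{\lambda}=2V_{0}+V_{3}+V_{6}+4V_{9}+5\overline{V}_{9}$, which is the first assertion.

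**From the decomposition to the isogeny.** For the second assertion I would group the isotypical components into three $H$-stable blocks according to which irreducible $H$-representations they involve: the block $2V_0$ of dimension $2$, the block $V_3+V_6$ of dimension $2$, and the block $4V_9+5\overline{V}_9$ of dimension $81$. For each block one defines, exactly as in the proof of Corollary \ref{JF de Adler}, an endomorphism of $J_{G}X_{\lambda}$ of the form $q=\sum_{g\in H}\chi(g)g$, where $\chi$ is the sum of the characters of the irreducible constituents appearing in that block; the point is to choose $\chi$ so that $\chi(g)\in\mathbb{Z}$ for all $g\in H$, which guarantees that $q\in\mathrm{End}(J_GX_\lambda)$ (acting via the integral group ring $\mathbb{Z}[H]$ on $H_7(X_\lambda,\mathbb{Z})$) and that its analytic representation $dq$ is a scalar multiple of the projector onto the corresponding isotypical piece of the tangent space. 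The images are then sub-tori $A_2$, $B_2$, $B_{81}$ of the stated dimensions, and the endomorphism $q_{A_2}+q_{B_2}+q_{B_{81}}$ is, up to a positive scalar on each factor, the identity, so the natural map $J_{G}X_{\lambda}\to A_{2}\times B_{2}\times B_{81}$ has finite kernel, i.e. is an isogeny of complex tori.

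**Integrality of the characters and the abelian-variety claim.** The delicate point is the integrality $\chi(g)\in\mathbb{Z}$, and here the three blocks behave differently. For the block $4V_9+5\overline{V}_9$, the sum of characters is $4\chi_{V_9}+5\chi_{\overline{V}_9}$, which is not integer-valued; instead one uses $\chi_{V_9}+\chi_{\overline{V}_9}$ (integer-valued, with values $9,0,\dots,0,-1,-1$ on the classes), producing an $H$-stable sub-torus of dimension $9\cdot(4+5)/?$—more precisely, since $V_9$ and $\overline{V}_9$ together occupy a $9\cdot 9=81$-dimensional tangent space (four copies of $V_9$ and five of $\overline{V}_9$, total dimension $9\cdot 4+9\cdot 5=81$), the projector $\psi=\psi_{V_9}+\psi_{\overline{V}_9}$ cuts out $B_{81}$, and since its tangent space lies entirely inside $H^{4,3}(X)^*$ (the trivial part $2V_0$ and $V_3,V_6$ sit in $H^{5,2}(X)^*\oplus H^{4,3}(X)^*$ elsewhere), the argument of Corollary \ref{JF de Adler}—restricting the definite form $Q'=-Q_G$ on $H^{4,3}(X)^*$ and $Q_G$ on $H^{5,2}(X)^*$ to the sublattice $H_7(X_\lambda,\mathbb{Z})\cap TB_{81}$—shows $B_{81}$ is an abelian variety. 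Similarly, $B_2$ corresponds to $V_3+V_6$, whose character sum $\chi_{V_3}+\chi_{V_6}$ takes values $2,\mu^3+\mu^6=-1,\dots$ on the classes of powers of $a$ and $2$ on $C_b,C_{b^2}$—these are real, indeed integers since $\mu^3$ is a primitive cube root of unity, so $\mu^{3k}+\mu^{6k}\in\{2,-1\}$—so again $\psi_{V_3}+\psi_{V_6}\in\mathbb{Q}[H]$ lifts to $\mathrm{End}(J_GX_\lambda)$ after clearing denominators, and since $TB_2=V_3\oplus V_6\subset H^{4,3}(X)^*$ the form $-Q_G$ is definite on it, so $B_2$ is an abelian variety. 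The remaining block $2V_0$ spans a $2$-dimensional tangent space split as $H^{5,2}(X)^*\oplus(\text{one trivial line in }H^{4,3}(X)^*)$, on which $Q_G$ has signature $(1,1)$; this is exactly why $A_2$ is only claimed to be a complex torus, not an abelian variety, and the proof makes no further assertion about it. I expect the main obstacle to be the bookkeeping: verifying carefully that the one-dimensional constituents of $H^{4,3}(X)^*|_H$ are precisely $V_0+V_3+V_6$ (so that the trivial summand $H^{5,2}(X)^*$ really does combine with exactly one more $V_0$ to give the $2$-dimensional $A_2$), which requires the explicit character computations for the restrictions $W_{19}|_H$ and $W_{20}^3|_H$ from Proposition \ref{Representation par restriction} together with $W_{18}^1|_H$ and $\overline{W}_9|_H=\overline{V}_9$.
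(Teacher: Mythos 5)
Your proof is correct and follows essentially the same route as the paper: restrict the $PSL_{2}(\mathbb{F}_{19})$-decomposition of $H^{4,3}(X)^{*}$ to $H$ (correctly supplying $W_{18}^{1}|_{H}=V_{9}\oplus\overline{V}_{9}$, which Proposition \ref{Representation par restriction} does not list explicitly), then use integer-valued sums of characters to produce sub-tori as images of endomorphisms and observe that the Hodge--Riemann form is definite on the blocks whose tangent spaces lie inside $H^{4,3}(X)^{*}$. The only cosmetic difference is that the paper first splits off $A_{2}$ and shows the complementary $83$-dimensional piece is an abelian variety before decomposing it as $B_{2}\times B_{81}$, whereas you construct the three blocks simultaneously; you should also add the paper's one-line remark that the $H$-representation on $H^{4,3}(X_{\lambda})$ is independent of $\lambda$ because $H$ acts on every smooth member of the pencil.
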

\begin{proof}
For the Adler cubic $X_{Ad}$, the representation of $PSL_{2}(\mathbb{F}_{19})$
on $H^{4,3}(X_{Ad})$ is \[
H^{4,3}(X_{Ad})=\overline{W}_{9}\oplus W_{18}^{1}\oplus W_{18}^{3}\oplus W_{19}\oplus W_{20}^{3}.\]
From Proposition \ref{Representation par restriction}, we know the
representation of $H$ on each of the factors of $H^{4,3}$. Since
$H$ acts also on each $X_{\lambda}$, the representation of $H$
on $H^{4,3}(X_{\lambda})$ is the same for all cubics $X_{\lambda}$
in the pencil, and we have: \[
R_{3}=\mbox{S}^{3}V_{9}/{I}_{3}=V_{0}+V_{3}+V_{6}+4V_{9}+5\overline{V}_{9}.\]
 Thus $TJ_{G}X_{\lambda}=(V_{0})^{\oplus2}\oplus V_{83}$. The image
of the endomorphism $\sum_{g\in G}g$ is therefore a $2$-dimensional
torus $A_{2}$. By considering the quotient map $J_{G}X_{\lambda}\rightarrow J_{G}X_{\lambda}/A_{2}$,
we see that $V_{83}\cap H_{7}(X_{\lambda},\mathbb{Z})$ is a lattice.
There is moreover on it a positive definite integral valued form (the
restriction of the Hodge-Riemann form), therefore $V_{83}$ is the
tangent space of a $83$-dimensional Abelian subvariety $A_{83}$
of $J_{G}X_{\lambda}$. By using the same arguments as in section
2, we see that $A_{83}$ is isogenous to a product $B_{2}\times B_{81}$
of two abelian sub-varieties.
\end{proof}

\bigskip{}

\bigskip{}

{\scriptsize 
}{\scriptsize \par}

{\scriptsize Atanas Iliev}{\scriptsize \par}

{\scriptsize Department of Mathematics}{\scriptsize \par}

{\scriptsize Seoul National University}{\scriptsize \par}

{\scriptsize 151-747 Seoul, Korea}{\scriptsize \par}

{\scriptsize e-mail: }\texttt{\scriptsize ailiev@snu.ac.kr}{\scriptsize \par}

{\scriptsize \bigskip{}
 }{\scriptsize \par}

{\scriptsize 
}{\scriptsize \par}

{\scriptsize Xavier Roulleau}{\scriptsize \par}

{\scriptsize Laboratoire de Mathématiques et Applications,}{\scriptsize \par}

{\scriptsize Téléport 2 - BP 30179 -}{\scriptsize \par}

{\scriptsize Boulevard Pierre et Marie Curie }{\scriptsize \par}

{\scriptsize 86962 Futuroscope Chasseneuil}{\scriptsize \par}

{\scriptsize France}{\scriptsize \par}

{\scriptsize e-mail: }\texttt{\scriptsize xavier.roulleau@math.}{\scriptsize univ-poitiers.fr}{\scriptsize \par}


\end{document}